\newtheorem{theorem}{Theorem}
\newtheorem{lemma}[theorem]{Lemma}
\newtheorem{corollary}[theorem]{Corollary}
\title{Hamiltonian properties in generalized lexicographic products}
\author{Jan Ekstein\thanks{Department of Mathematics and European Centre of Excellence NTIS - New Technologies for the Information Society, Faculty of Applied Sciences, University of West Bohemia, Pilsen, Technick\'a 8, 306 14 Plze\v n, Czech Republic
\newline e-mail: \texttt{ekstein@kma.zcu.cz, teska@kma.zcu.cz}.}\and
Jakub Teska \footnotemark[1]}
\date{\today}
\begin{document}
\maketitle

\begin{abstract}
The lexicographic product $G[H]$ of two graphs $G$ and $H$ is obtained from $G$ by replacing each vertex with a copy of $H$ and adding all edges between any pair of copies corresponding to adjacent vertices of $G$. We consider also the generalized lexicographic product such that we replace each vertex of $G$ with arbitrary graph on the same number of vertices. We present sufficient and necessary conditions for traceability, hamiltonicity and hamiltonian connectivity of $G[H]$ if $G$ is a path and hence we improved and extended results in M. Kriesell, A Note on Hamiltonian Cycles in Lexicographical Products.
\end{abstract}

\section{Introduction}
A product of graphs is well known graph operation (e.g. Cartesian, direct, lexicographic) and study hamiltonian properties in some product of graphs is standard problem in graph theory. In this paper we denote to a lexicographic product of graphs. 

The lexicographic product $G[H]$ of two graphs $G$ and $H$ is defined by a vertex set $V(G[H])=V(G)\times V(H)$ and an edge set $E(G[H])=\{(g,h)(g',h'):gg'\in E(G)\mbox{ or } g=g'\wedge hh'\in E(H)\}$. In other words the lexicographic product $G[H]$ of two graphs $G$ and $H$ is obtained from $G$ by replacing each vertex with a copy of $H$ and adding all edges between any pair of copies corresponding to adjacent vertices of $G$. A typical sufficient condition for the existence of a hamiltonian cycle or a hamiltonian path in a lexicographic product $G[H]$ forces $G$ to contain a hamiltonian cycle or a hamiltonian path and $H$ to have some additional properties. Hamiltonian cycles and paths in lexicographic products have been studied in \cite{BarSz}, \cite{KaiKri}, \cite{Kri}, \cite{Ng}, and \cite{Tei}.

Clearly, $G[H]$ contains a cycle of length 3 if both of $G$ and $H$ contain at least one edge. Kaiser and Kriesell proved in \cite{KaiKri} that concepts of pancyclicity and hamiltonicity coincide in the case of lexicographic products of graphs with at least one edge. Recall that the graph $G$ is weakly pancyclic or pancyclic, if it contains cycles of every length between the length of a shortest cycle and that of a longest one, hamiltonian, respectively.

\begin{theorem}\emph{\textbf{\cite{KaiKri}}}
 \label{Pancyclicity}
 If $G$, $H$ are graphs with at least one edge each, then $G[H]$ is weakly pancyclic.
\end{theorem}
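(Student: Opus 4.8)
The plan is to establish two facts: that the girth of $G[H]$ equals $3$, and that the set of cycle lengths occurring in $G[H]$ is an interval. For the girth, I would fix an edge $gg'\in E(G)$ and an edge $hh'\in E(H)$; then $(g,h),(g,h'),(g',h)$ induce a triangle (the first pair by the $H$-edge, the other two pairs by the $G$-edge), so length $3$ is attained and no shorter cycle exists in a simple graph. Note also that $H$ having an edge forces $n:=|V(H)|\ge 2$, a fact I will use repeatedly. Since weak pancyclicity only asks for every length between the girth and the circumference $c$, it suffices to prove that every length $\ell$ with $3\le\ell\le c$ is realized.

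To organise cycles, I would project any cycle $C$ onto $G$ by the first coordinate and cut it into maximal \emph{blocks} of vertices lying in a common copy; consecutive blocks sit in $G$-adjacent copies, and inside a block the second coordinates trace a path of $H$. The main shortening move is this: if some block has at least two vertices, delete its last vertex; its predecessor lies in the same copy and is joined to the first vertex of the next block by a $G$-edge (since the two copies are adjacent in $G$), so a cycle of length one smaller results. Applying this repeatedly to a longest cycle $C^\ast$, I can drain every block down to a single vertex, realizing all lengths from $c$ down to $M$, where $M$ is the number of blocks of $C^\ast$. This leaves the lengths in $[3,M]$, and here lies the obstacle: the cycle reached at the bottom is \emph{transversal} (every step is a $G$-edge and no block exceeds one vertex), and such cycles admit no local shortening, since deleting a vertex would demand a chord of $G$ that need not exist.

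To fill $[3,M]$ I would instead build cycles directly from the single $H$-edge $ab$ and the $n\ge 2$ vertices per copy, by a ``there-and-back'' construction along a $G$-path $u_1u_2\cdots u_p$: running out through the vertices $(u_i,a)$, turning around at $u_p$ via the $H$-edge $(u_p,a)(u_p,b)$, and returning through the vertices $(u_i,b)$, closing either directly at $u_1$ (length $2p-1$) or after one further $H$-edge $(u_1,b)(u_1,a)$ (length $2p$). Running this over the subpaths of length $2,\dots,p$ yields every length in $[3,2p]$, so taking the $G$-path from the skeleton of $C^\ast$ covers $[3,M]$ whenever that skeleton is a simple cycle, and combined with the drained range $[M,c]$ this gives all of $[3,c]$. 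The step I expect to be the real difficulty is exactly the reconciliation of the two ranges when the skeleton of $C^\ast$ is \emph{not} a simple cycle of $G$, i.e.\ when $C^\ast$ revisits a copy in several separate blocks: then the projected closed walk need not contain a long simple $G$-path, and the there-and-back argument must be carried out inside the few copies actually visited, where $M$ is in turn bounded by $n$ times that number of copies. Making this bookkeeping precise, and checking both parities of $\ell$ throughout, is where the care is needed; everything else reduces to the two routine ingredients above, the block-skip shortening and the there-and-back construction.
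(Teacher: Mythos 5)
First, a point of reference: the paper never proves this statement -- Theorem~\ref{Pancyclicity} is imported as a black box from Kaiser and Kriesell \cite{KaiKri} -- so your attempt has to be judged on its own, not against an internal argument. Your individual ingredients are sound: the triangle $(g,h),(g,h'),(g',h)$ does give girth $3$; the block-shortening move is valid (the predecessor of the deleted vertex lies in the same copy as it and is therefore joined by a cross edge to the first vertex of the next block), and it realizes every length in $[\max(M,3),c]$; and the there-and-back cycles realize every length in $[3,2p]$ when $G$ contains a simple path on $p$ vertices.

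The gap is the one you flag yourself, and it is not mere bookkeeping -- it can swallow essentially the whole proof. Take $G=K_2$ and let $H$ consist of a single edge $ab$ together with $n-2$ isolated vertices, $n\ge 4$. Then $G[H]$ contains $K_{n,n}$ and is Hamiltonian, so $c=2n$. Every block of any cycle is a path of $H$, hence has at most $2$ vertices, and a block of size $2$ must use the edge $ab$ of its copy; since there are only two such edges in the whole graph, any longest cycle has $M\ge 2n-2$, and your draining step covers at most the lengths $2n-2,2n-1,2n$. The longest simple path of $G$ has $p=2$ vertices, so there-and-back yields only lengths $3$ and $4$. Every length in $[5,2n-3]$ is therefore produced by neither construction. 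These lengths do exist (even lengths come from cycles of $K_{n,n}$; odd lengths come from closing an even path between $a$ and $b$ inside one copy with the edge $ab$), but that is a genuinely different construction, and in general, for a longest cycle whose projection revisits copies, what you would need to prove about ``the few copies actually visited'' is essentially the theorem itself restricted to the induced subproduct on those copies -- so the argument as structured does not close. To finish you must replace the there-and-back step by a construction valid for an arbitrary connected set $W$ of visited copies (for instance, threading a closed spanning walk of the induced subgraph $G(W)$ through a controllable number of vertices per copy, using one $H$-edge to adjust parity), rather than relying on a long simple path of $G$.
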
  

In this paper we consider also the concept of the generalized lexicographic product mentioned in \cite{GuHou}, \cite{ChoKar} (defined as an expansion), and \cite{Sam}. Basically, the generalized lexicographic product is the graph $G[H_1, H_2,...,H_m]$, which will be a graph like a lexicographic product with the difference that every vertex of~$G$ can be replaced by a different graph $H_i$. Precisely, let $G$ be a graph with $V(G)=\{u_1,u_2,...,u_{m}\}$ and $H_i$ be an arbitrary graph $i=1,2,...m$. Then generalized lexicographic product $G[H_1, H_2,...,H_m]$ of a graph $G$ and $H_1, H_2,...,H_m$ is obtained from $G$ by replacing each vertex $u_i$ with the graph $H_i$ and adding all edges between graphs $H_i$ and $H_j$ if the corresponding vertices $u_i$, $u_j$ are adjacent in $G$.  We say that $G[H_1, H_2,...,H_m]$ is lex-regular if the number of vertices of $H_i$ is the same for $i=1,2,...,m$. 

For a given graph, $G$ we define $\pi(G)$ to be the maximum number of edges of a spanning linear forest of $G$ (a forest is linear if  its components are paths). 

Main results of this paper are the following theorems which generalize and improve some results from \cite{Kri}.

\begin{theorem}
 \label{main-odd}
Let $P_{2k+1}$ be a path with odd number of vertices, $k\geq 1$. Let $H_1,H_2,...,H_{2k+1}$ be graphs with $n$ vertices. The graph $P_{2k+1}[H_1, H_2,...,H_{2k+1}]$ is

\begin{itemize}

\item[(i)] hamiltonian if and only if $\pi(H_1)\geq 1$, $\pi(H_{2k+1})\geq 1$, and \newline $\sum^k_{i=0}\pi(H_{2i+1})\geq n$.

\item[(ii)] traceable if and only if $\sum^k_{i=0}\pi(H_{2i+1})\geq n-1$.

\item[(iii)] hamiltonian connected if and only if $\pi(H_1)\geq 2$, $\pi(H_{2k+1})\geq 2$, and \newline $\sum^k_{i=0}\pi(H_{2i+1})\geq n+1$.  

\end{itemize}
\end{theorem}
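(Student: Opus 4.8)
The plan is to analyze an arbitrary Hamiltonian cycle or path through the \emph{segment-crossing} structure it induces on the copies. Label the copies $H_1,\dots,H_{2k+1}$ along $P_{2k+1}$, and for a putative Hamiltonian cycle $C$ let $b_i$ denote the number of edges of $C$ joining $H_i$ to $H_{i+1}$ ($1\le i\le 2k$). Deleting the crossing edges leaves, inside each copy $H_i$, a disjoint union of paths (its \emph{segments}) spanning $V(H_i)$; counting segment endpoints against crossing edges gives $2s_1=b_1$, $2s_{2k+1}=b_{2k}$, and $2s_i=b_{i-1}+b_i$ for interior $i$, where $s_i$ is the number of segments in $H_i$. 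Since the segments form a spanning linear forest of $H_i$, I record the inequalities $p(H_i)\le s_i\le n$, where I write $p(H)=n-\pi(H)$ for the least number of vertex-disjoint paths covering $V(H)$ (equivalently $\pi(H)$ is the largest number of edges of a spanning linear forest of $H$). Finally, each cut between $\{H_1,\dots,H_i\}$ and the rest is crossed an even number of times, so every $b_i$ is even and, by connectedness, $b_i\ge 2$.

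For the necessity half I combine these relations. Summing $s_i$ over odd and over even indices telescopes to the same value, so $\sum_{i\ \mathrm{odd}}s_i=\sum_{i\ \mathrm{even}}s_i=\tfrac12\sum_{i=1}^{2k}b_i$. Bounding the even side by $s_{2j}\le n$ (there are $k$ even copies) and the odd side by $s_{2i+1}\ge p(H_{2i+1})$ yields $\sum_{i=0}^{k}p(H_{2i+1})\le kn$, which is exactly $\sum_{i=0}^{k}\pi(H_{2i+1})\ge n$. The endpoint conditions $\pi(H_1),\pi(H_{2k+1})\ge 1$ come from the observation that if $H_1$ were edgeless then $s_1=n$ forces $b_1=2n$ and hence $s_2>n$, a contradiction. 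For (ii) I replace the cycle by a Hamiltonian path: its two ends flip the parity of the relevant cuts and contribute a half-segment correction, relaxing the bound to $\sum\pi(H_{2i+1})\ge n-1$ and, because a path may terminate inside an endpoint copy, removing the need for $\pi(H_1),\pi(H_{2k+1})\ge 1$. For (iii) I apply the path count to the worst choice of the two prescribed ends (both placed in $H_1$, resp.\ $H_{2k+1}$): pinning two vertices of an endpoint copy as free path-ends consumes one unit of balancing slack, producing $\pi(H_1),\pi(H_{2k+1})\ge 2$ and $\sum\pi(H_{2i+1})\ge n+1$.

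For sufficiency I reverse the construction. Given the stated inequalities I first exhibit even integers $b_i\ge 2$ with $p(H_i)\le\tfrac12(b_{i-1}+b_i)\le n$ for every $i$; the only globally binding constraint is the sum condition, and a greedy left-to-right assignment (raising individual $b_i$ to meet an even copy's lower bound while never letting any consecutive pair exceed $2n$) always succeeds once $\sum p(H_{2i+1})\le kn$. I then realize in each $H_i$ a spanning linear forest with exactly $s_i=\tfrac12(b_{i-1}+b_i)$ paths, possible because $s_i$ lies between $p(H_i)$ and $n$, and splice the $2s_i$ path-ends to path-ends of the neighbouring copies through the complete bipartite joins. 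Because the joins are complete and every $b_i\ge 2$, the matchings of path-ends can be chosen so that the resulting $2$-regular (or, for a path, almost $2$-regular) spanning subgraph is connected, i.e.\ a single Hamiltonian cycle; the traceable and hamiltonian-connected versions follow by leaving the two prescribed terminals as free path-ends and running the same merging argument.

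The main obstacle I anticipate is twofold. First, the feasibility of the crossing numbers: one must argue that the single inequality $\sum_{i=0}^{k}\pi(H_{2i+1})\ge n$, together with the endpoint conditions, already guarantees a consistent choice of all the $b_i$, even when the even copies are sparse and force large local crossings; this is where the telescoping identity is used in both directions. Second, and harder, is the global merging step, namely guaranteeing that the chosen end-matchings yield one cycle rather than several, and, for part (iii), checking this \emph{uniformly} over every pair of terminal vertices, including the degenerate cases where both terminals sit in the same copy or in an endpoint copy. That uniformity is precisely what forces the stronger thresholds $\pi\ge 2$ and the extra $+1$ in the sum.
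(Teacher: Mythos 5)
Your necessity framework --- the telescoping identity relating $\sum_{i\ \mathrm{odd}}s_i$ and $\sum_{i\ \mathrm{even}}s_i$ via the crossing numbers $b_i$, together with $n-\pi(H_i)\le s_i\le n$ --- is sound and is in fact a cleaner packaging of the paper's step-by-step edge count; for (i) and (ii) it delivers exactly the stated bounds. But your necessity argument for (iii) picks the wrong extremal placement of the terminals. Putting both ends in $H_1$ places them in \emph{odd} copies, which by your own parity bookkeeping yields only $\sum_{i=0}^{k}\pi(H_{2i+1})\ge n-1$, not $n+1$, and it does not force $\pi(H_1)\ge 2$ either: for example $P_3[\overline{K_2},K_2,K_2]$ admits a hamiltonian path with both ends in the edgeless first copy. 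The binding case is both terminals in \emph{even} copies: then the even side of the telescoping sum loses $2$ from the available degree, giving $\sum_{i=0}^{k}\pi(H_{2i+1})\ge n+1$; and taking both terminals in $H_2$ gives $b_1+b_2\le 2n-2$ with $b_2\ge 2$ by connectivity and cut parity, hence $b_1=2s_1\le 2n-4$ and $\pi(H_1)\ge 2$. This is precisely the case analysis the paper isolates in Lemma~\ref{lemmapath2} and in the final paragraph of its proof of (iii).

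On the sufficiency side your plan (choose even crossing numbers, realize spanning linear forests with prescribed numbers of segments, splice through the complete joins) is the paper's strategy, but the step you yourself flag as the main obstacle --- that the end-matchings can be chosen so that the union of all segments and crossing edges is a \emph{single} cycle or path rather than a disjoint union of several --- is left unproved, and it is not automatic. The paper closes this gap by encoding the data as a connected multiple $G'$ of $P_{2k+1}$ with loops, in which every vertex has degree $2n$ (the terminals $2n-1$ or $2n-2$), deleting the loops, taking a closed or open Eulerian trail of $G'$, and substituting the $t$-th visit to $u_i$ by the $t$-th segment of $H_i$; connectedness of the resulting hamiltonian cycle or path is then inherited from the Eulerian trail for free. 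You would need either this device or an explicit interleaving argument. Likewise, the feasibility of your greedy choice of the $b_i$ (the paper writes the multiplicities out in closed form and checks all degrees), and the requirement that the prescribed terminals be segment \emph{ends} rather than interior vertices (handled in the paper by deleting up to two forest edges, which is exactly what consumes the extra $+1$ and the hypotheses $\pi(H_1),\pi(H_{2k+1})\ge 2$), are asserted but not carried out.
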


\begin{theorem}
 \label{main-even}
Let $P_{2k}$ be a path with even number of vertices, $k\geq 1$. Let $H_1,H_2,...,H_{2k}$ be graphs with $n$ vertices. Then the graph $P_{2k}[H_1, H_2,...,H_{2k}]$ is 

\begin{itemize}

\item[(i)] hamiltonian if and only if $k=1$ or $\pi(H_1)\geq 1$ and $\pi(H_{2k})\geq 1$.

\item[(ii)] traceable.

\item[(iii)] hamiltonian connected if and only if
      \begin{itemize}
         \item[$\bullet$] $\pi(H_1)\geq 1$ and $\pi(H_{2k})\geq 1$ for $k=1$.
         \item[$\bullet$] $\pi(H_1)\geq 2$ and $\pi(H_{2})\geq 2$ for $k>1$.
      \end{itemize}       

\end{itemize}
\end{theorem}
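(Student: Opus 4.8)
The plan is to treat all three parts of Theorem~\ref{main-even} with a single combinatorial engine that records how a spanning cycle or path of $P_{2k}[H_1,\dots,H_{2k}]$ meets the individual copies. Given such a cycle (or path) $C$, for each $i$ let the maximal subpaths of $C$ lying entirely inside the copy $H_i$ be its \emph{segments}; they form a partition of $V(H_i)$ into paths of $H_i$, so their number $s_i$ satisfies $p(H_i)\le s_i\le n$, where $p(H_i)=n-\pi(H_i)$ is the minimum number of vertex-disjoint paths covering $V(H_i)$ (equivalently, $\pi(H_i)$ is the maximum number of edges in a spanning linear forest of $H_i$). Because only consecutive copies are joined, contracting each segment turns $C$ into a closed (resp.\ open) walk on the path $P_{2k}$; writing $t_i\ge 1$ for the number of times the edge $u_iu_{i+1}$ is used in one direction, one gets $s_1=t_1$, $s_{2k}=t_{2k-1}$ and $s_i=t_{i-1}+t_i$ for internal $i$. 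This dictionary converts each hamiltonicity question into the feasibility of the system $p(H_i)\le s_i\le n$, and, conversely, I would show every feasible choice of the $t_i$ is realizable as a genuine simple cycle/path by using the complete bipartite links between adjacent copies to stitch prescribed segment ends together.

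For necessity I would exploit that an even path has a balanced bipartition, so that deleting all even-indexed copies (or all odd-indexed ones) removes $kn$ vertices and leaves exactly the $k$ remaining copies; the standard component count for Hamiltonian cycles then only yields $\sum p\le kn$, which is vacuous here. This is precisely why no global summation condition survives in the even case and why (ii) holds unconditionally. The genuine constraints are local to the pendant copies $H_1$ and $H_{2k}$: from $s_2=t_1+t_2\le n$ and $t_2\ge 1$ one gets $t_1=s_1\le n-1$, hence $p(H_1)\le n-1$, i.e.\ $\pi(H_1)\ge 1$, and symmetrically $\pi(H_{2k})\ge 1$, giving necessity in (i) for $k>1$; the degenerate case $k=1$ (where $H_2$ is itself pendant and the inequality $t_1\le n-1$ disappears) I would dispose of directly from $P_2[H_1,H_2]\supseteq K_{n,n}$. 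For (iii) the same bookkeeping is run with two prescribed endpoints $x,y$: placing both of them inside the pendant copy $H_1$ traps two segment-ends there, the cut at the unique neighbour copy $H_2$ must then carry strictly more connections, and this is exactly what sharpens the requirement to $\pi(H_1)\ge 2$ and $\pi(H_2)\ge 2$ when $k>1$.

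For sufficiency I would reverse the dictionary and build the desired structures explicitly. For traceability~(ii) a single ``snake'' that threads each copy in turn, alternating with its successor as in a Hamiltonian path of $K_{n,n}$, works for arbitrary $H_i$ and needs no hypothesis. For hamiltonicity~(i) I would fix segment numbers $s_i$ meeting $p(H_i)\le s_i\le n$ --- possible exactly under the stated conditions --- choose the corresponding linear-forest partition of each $V(H_i)$, and close everything into one cycle through the complete bipartite joins. The hamiltonian-connected case~(iii) is the same construction, but now the two segments containing the prescribed $x$ and $y$ must be forced to the two ends of the global path; the extra edge guaranteed by $\pi\ge 2$ in $H_1$ and $H_2$ supplies the flexibility to peel $x$ and $y$ off while still covering those copies.

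I expect the sufficiency argument for hamiltonian connectivity to be the main obstacle: one must handle all positions of the endpoints $x,y$ (same copy versus different copies, pendant versus interior, and the awkward situation where both lie in $H_1$ or both in $H_2$), check in each case that the segment counts stay within $[p(H_i),n]$ after reserving the endpoint segments, and verify that the resulting object is a simple path rather than a shorter one. Reconciling the two regimes $k=1$ and $k>1$, where the pendant copy $H_2$ changes character, will require a small separate treatment, but the balanced bipartition of $P_{2k}$ keeps every intermediate step feasible, which is the structural reason the even case is so much cleaner than the odd case of Theorem~\ref{main-odd}.
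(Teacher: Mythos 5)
Your segment-and-crossing-count dictionary is, in substance, the same machinery the paper uses (linear-forest decompositions of the copies, stitched together through the complete joins along an Eulerian trail of a suitable multiple of $P_{2k}$), and your outlines of (i) and (ii) are sound. The genuine problem is your necessity argument for (iii). Placing both endpoints $x,y$ in the pendant copy $H_1$ does not sharpen the cut condition --- it weakens it to a vacuous statement. With both path-ends in $H_1$, the $s_1\ge n-\pi(H_1)$ segments of $H_1$ send only $2s_1-2$ edges across to $H_2$; all $n$ vertices of $H_2$ are interior to the path, so the cut between $H_2$ and $H_3$ carries at most $2n-(2s_1-2)$ edges, and requiring this to be at least $2$ yields only $s_1\le n$, which always holds. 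Concretely, in $P_4[2K_1,K_2,K_2,K_2]$ (so $\pi(H_1)=0$) the path $a_1b_1c_1d_1d_2c_2b_2a_2$ is a hamiltonian path joining the two vertices of the edgeless $H_1$, so endpoints in $H_1$ detect no obstruction at all. The obstruction is seen by putting both endpoints in the \emph{neighbour} copy $H_2$ (respectively $H_{2k-1}$), as the paper does: then $H_1$ forces $2s_1\ge 2(n-\pi(H_1))$ crossing edges into $H_2$, while the degree sum of $H_2$ on the path is only $2n-2$ and the cut towards $H_3$ still needs at least $2$ edges when $k>1$; hence $2(n-\pi(H_1))+2\le 2n-2$, i.e. $\pi(H_1)\ge 2$.

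Beyond that, the sufficiency half of (iii), which you yourself flag as the main obstacle, is exactly where the paper does its real work: it constructs explicit multiples such as $G'=2P_{2k}+(2n-4)F+S_1+S_2-F_1+F_2$ from $1$-factors of subpaths between $u_a$ and $u_b$, split according to the parity of $b-a$, and only then invokes the trail-replacement argument. Your appeal to ``the extra edge supplies the flexibility'' leaves the case analysis over the positions of $x$ and $y$ (same copy, adjacent copies, interior copies) and the verification that the chosen segment counts assemble into a single simple path entirely undone, so part (iii) is not yet established. (Minor point: the condition for $k>1$ should be $\pi(H_1)\ge 2$ and $\pi(H_{2k})\ge 2$; the theorem statement's ``$\pi(H_2)$'' is a typo that you have inherited.)
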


\bigskip

Observe that Theorem \ref{main-odd} and Theorem \ref{main-even} give a complete characterization of hamiltoni\-city of $P_n[H]$, traceability of $P_n[H]$, and hamiltonian connectivity of $P_n[H]$.

\section{Preliminaries}
As for standard terminology, we refer to the book by Bondy and Murty \cite{Bon}. However, before proving Theorem \ref{main-odd} and Theorem \ref{main-even} we mention several concepts and results which we need to make use of. 

For a multigraph $G$ and $x,y \in V(G)$ let $[x,y]_G$ be the set of edges between $x$ and $y$ and let $m_G(xy)=|[x,y]_G|$ be the multiplicity of the edge $xy$ in $G$. In particular, $\ell_G(x)=|[x,x]_G|$ denotes the number of loops at $x$ and $\ell(G)=\max\{\ell_G(x): x\in V(G)\}$. Note that the degree of a vertex $x$ denoted by $d_G(x)=\sum_{y\in V(G)} m_G(xy) + 2\ell_G(x)$. A multigraph $G$ is $k$ regular if $d_G(x)=k$ for every vertex $x$ in $V(G)$. Moreover, let $|G|=|V(G)|$ and $||G||=|E(G)|$. For any $X\subseteq V(G)$, let $G(X)$ be the submultigraph induced by $X$.

A multigraph $G'$ is said to be a multiple of a graph $G$ if $V(G')=V(G)$ and for all $x\neq y \in V(G)$, $m_{G'}(xy)>0$ holds only if $xy\in E(G)$. This means that from a given graph $G$, we can obtain a multiple $G'$ by adding loops or by replacing a single edge in $G$ by an arbitrary number of edges. 

In \cite{Kri}, Kriesell proved that $G[H]$ is hamiltonian if $G$ has a connected, $k$-regular multiple with additional properties.

\begin{theorem}\emph{\textbf{\cite{Kri}}}
 \label{multiple}
 Let $G$ and $H$ be graphs. If $G$ has a connected, $2|H|$-regular multiple $G'$ satisfying $\pi(H)\geq \ell_{G'}(x)$ for all $x\in V(G')$, then $G[H]$ contains a hamiltonian cycle that contains exactly $m_{G'}(xy)$ edges between $V(G(\{x\})[H])$ and $V(G(\{y\})[H])$ for all $x\neq y$ in $V(G)$.
\end{theorem}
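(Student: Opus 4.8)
The plan is to realise the Hamiltonian cycle by taking an Eulerian circuit of the \emph{loopless} part of $G'$ and then expanding each vertex into a prescribed spanning linear forest of its copy of $H$, the loops of $G'$ being reincarnated as the edges of that forest. Write $n=|H|$ and, for $x\in V(G)$, let $H_x$ denote the copy $G(\{x\})[H]$ and put $p_x=n-\ell_{G'}(x)$. First I would note that a spanning linear forest of an $n$-vertex graph has at most $n-1$ edges, so the hypothesis gives $\ell_{G'}(x)\le\pi(H)\le n-1$ and hence $p_x\ge1$ for every $x$ (in particular $|V(G)|\ge2$). Let $G_0'$ be $G'$ with all loops deleted; then $d_{G_0'}(x)=d_{G'}(x)-2\ell_{G'}(x)=2p_x$ is even and positive, deleting loops preserves connectivity, so $G_0'$ is a connected multigraph with all degrees even and therefore admits an Eulerian circuit $C_0$. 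Crucially $C_0$ passes through each $x$ exactly $d_{G_0'}(x)/2=p_x$ times.

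Next I would build the cycle. For each $x$ fix a spanning linear forest $L_x$ of $H_x$ with \emph{exactly} $\ell_{G'}(x)$ edges; this exists because one may start from a maximum spanning linear forest (with $\pi(H)\ge\ell_{G'}(x)$ edges) and delete edges one at a time, each deletion keeping a spanning linear forest. Then $L_x$ consists of exactly $p_x$ paths, which partition $V(H_x)$. Now detach $C_0$ at every vertex: replacing each of the $p_x$ passages through $x$ by a distinct new node turns the single closed trail $C_0$ into a single cycle $\widehat C$ on $\sum_x p_x$ nodes, having exactly $m_{G'}(xy)$ edges between the $x$-nodes and the $y$-nodes. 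I would identify the $p_x$ nodes at $x$ bijectively with the $p_x$ paths of $L_x$, and route the two $C_0$-edges incident to each node to the two endpoints of the corresponding path (to the single vertex, twice, for a one-vertex path). Finally, expanding every node back into its path turns $\widehat C$ into a single cycle in $G[H]$ that meets every vertex of every $H_x$ exactly once, i.e.\ a Hamiltonian cycle.

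It then remains to verify correctness, which I expect to be routine rather than the crux. Every edge of $\widehat C$ joins nodes of distinct copies $H_x,H_y$ with $xy\in E(G)$, and since $G[H]$ contains all edges between adjacent copies, each routed edge is genuinely an edge of $G[H]$; the edges internal to the expanded paths are edges of $L_x\subseteq E(H_x)$. A degree count confirms each vertex $a\in H_x$ has total degree $2$: it receives $2-d_{L_x}(a)$ inter-copy edges, and summing gives $\sum_a(2-d_{L_x}(a))=2n-2\ell_{G'}(x)=2p_x=d_{G_0'}(x)$, so the free ends match the edges of $C_0$ exactly. The inter-copy edges used between $H_x$ and $H_y$ are precisely the $m_{G'}(xy)$ edges of $C_0$ between $x$ and $y$, the prescribed count. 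The one genuinely load-bearing point is the passage to $G_0'$: the entire single-cycle requirement is discharged automatically by the fact that an Eulerian circuit, detached at its vertices, is a single cycle, and this is exactly why connectivity together with $2n$-regularity of $G'$ (forcing even degrees and $p_x\ge1$) are the right hypotheses, while $\pi(H)\ge\ell_{G'}(x)$ is exactly what lets the loops be absorbed into $L_x$.
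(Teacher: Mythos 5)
Your proof is correct and follows essentially the same route the paper (and Kriesell's original argument) uses: delete the loops to get a connected even-degree multigraph, take a closed Eulerian trail, and replace the $t$-th passage through each vertex $x$ by the $t$-th component of a spanning linear forest of $H_x$ with exactly $\ell_{G'}(x)$ edges, which is possible precisely because $\ell_{G'}(x)\le\pi(H)$. The degree and edge-multiplicity bookkeeping you supply matches the paper's construction in the proof of Theorem~\ref{main-odd}(i), so there is nothing to add.
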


By a \emph{$uv$-path} we mean a path from $u$ to $v$ in $G$. If a $uv$-path is hamiltonian, we call it a \emph{$uv$-hamiltonian path}. The graph $G$ is traceable, if $G$ contains a hamiltonian path. The graph $G$ is hamiltonian connected, if every two vertices of $G$ are connected by a hamiltonian path.   

Teichert in \cite{Tei} and also Kriesell as corollary of Theorem \ref{multiple} in \cite{Kri} proved the following.

\begin{theorem}\emph{\textbf{(\cite{Kri} and \cite{Tei}})}
 \label{KriTei}
Let $G$ and $H$ be graphs, $|G|\geq 2$. Suppose that $G$ contains a hamiltonian path.
\begin{itemize}
\item[(i)] If $|G|$ is even, then $G[H]$ is traceable.

\item[(ii)] If $|G|$ is even and $||H||\geq 1$, then $G[H]$ is hamiltonian.

\item[(iii)] If $|G|$ is odd and $\frac{|H|-1}{2} \leq \pi(H)$, then $G[H]$ is traceable.

\item[(iv)] If $|G|$ is odd and $\frac{|H|}{2}\leq \pi(H)$, then $G[H]$ is hamiltonian.      
\end{itemize}
\end{theorem}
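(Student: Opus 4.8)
The plan is to first reduce everything to the case where $G$ is itself a path. Since $G$ contains a hamiltonian path on its $m=|G|$ vertices, say $u_1u_2\cdots u_m$, and since adding edges to $G$ only enlarges the edge set of $G[H]$, it suffices to prove all four assertions for $G=P_m$. Denote by $C_i$ the copy of $H$ replacing $u_i$, and recall that consecutive copies $C_i,C_{i+1}$ are joined by a complete bipartite graph $K_{n,n}$, where $n=|H|$. I would then split the statements into the two hamiltonicity claims (ii) and (iv), which I would deduce from Theorem~\ref{multiple}, and the two traceability claims (i) and (iii), which I would settle by explicit constructions.

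For (ii) and (iv) the idea is to build a connected $2n$-regular multiple $G'$ of $P_m$ whose only loops sit at the two ends $u_1,u_m$, and then invoke Theorem~\ref{multiple}. Writing $a_i=m_{G'}(u_iu_{i+1})$, I would let the multiplicities alternate between two values $p$ and $q$ with $p+q=2n$; this forces every interior loop to vanish (since consecutive edges carry $p$ and $q$) and leaves the end loops $\ell_{G'}(u_1)=n-\tfrac{a_1}{2}$ and $\ell_{G'}(u_m)=n-\tfrac{a_{m-1}}{2}$. The parity of $m$ now governs the arithmetic. When $m$ is even the two end edges carry the \emph{same} value $p$, so choosing $p=2n-2$, $q=2$ makes both end loops equal to $1$, and the hypothesis $||H||\ge 1$ (equivalently $\pi(H)\ge1$) makes the hypothesis $\pi(H)\ge\ell_{G'}(x)$ of Theorem~\ref{multiple} hold. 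When $m$ is odd the two end edges carry \emph{complementary} values $p$ and $q=2n-p$, so the end loops sum to $n$; balancing them makes each about $n/2$, feasible exactly under $\pi(H)\ge n/2$. I would take $p=q=n$ when $n$ is even and $p=n+1$, $q=n-1$ when $n$ is odd to keep the loop counts integral, in which case the larger end loop equals $\lceil n/2\rceil\le\pi(H)$. Connectivity of $G'$ is immediate since all $a_i\ge1$, so Theorem~\ref{multiple} yields a hamiltonian cycle in both cases.

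For the traceability claims I would argue directly. When $m$ is even (part (i)) I would pair the copies as $(C_1,C_2),(C_3,C_4),\dots,(C_{m-1},C_m)$; inside each pair the join $K_{n,n}$ contains the spanning path $v_{i,1}v_{i+1,1}v_{i,2}v_{i+1,2}\cdots v_{i,n}v_{i+1,n}$, which starts in the lower copy and ends in the upper one. Since the upper copy of one pair is adjacent to the lower copy of the next, these spanning paths link in series into a hamiltonian path of $P_m[H]$ that uses no edge of $H$ at all. When $m$ is odd (part (iii)) one copy, say $C_1$, is left unpaired and neighbours only $C_2$; I would cover $C_1$ by a maximum spanning linear forest of $H$, which has $n-\pi(H)$ path components, and weave these components into the global path through the bipartite join to $C_2$, spending vertices of $C_2$ between consecutive components and absorbing the resulting deficit across the pairs $(C_2,C_3),\dots,(C_{m-1},C_m)$ by the same zigzag as in part (i).

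The step I expect to be the main obstacle is precisely this odd-length traceability construction and its matching with the sharp bound $\tfrac{n-1}{2}\le\pi(H)$: one must check that the $n-\pi(H)$ forest components in the unpaired copy can be stitched together, and that every leftover vertex created in the intermediate copies is reabsorbed, using only the bipartite joins and never an uncontrolled edge of $H$. Heuristically the threshold is explained by the loop-sum computation above: a hamiltonian cycle forces the two end loops to sum to $n$, whereas a hamiltonian \emph{path} has one free endpoint and so needs them to sum only to $n-1$, which is exactly $\pi(H)\ge\tfrac{n-1}{2}$.
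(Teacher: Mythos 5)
Your reduction to $G=P_m$, your derivation of (ii) and (iv) from Theorem \ref{multiple} via the alternating-multiplicity multiple with loops only at the two end vertices, and your zigzag construction for (i) are all correct; this is essentially how Kriesell obtains these statements as corollaries of Theorem \ref{multiple}, and it is the same Eulerian-trail machinery the present paper uses for its own, sharper Theorem \ref{main-odd} (the paper itself only cites Theorem \ref{KriTei} rather than reproving it).

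The genuine gap is in (iii). Your construction places a spanning linear forest only in the end copy $C_1$ and claims the resulting imbalance can be absorbed by pure bipartite zigzags across the pairs $(C_2,C_3),\dots,(C_{m-1},C_m)$. It cannot. If a hamiltonian path $P$ of $P_{2k+1}[H]$ uses edges of $H$ only inside $C_1$, say $e_1\le\pi(H)$ of them, then $P\cap C_1$ is a linear forest with $n-e_1$ components, which forces at least $2(n-e_1)-2$ edges of $P$ between $C_1$ and $C_2$, hence at most $2n-(2(n-e_1)-2)=2e_1+2$ edges between $C_2$ and $C_3$; since $P\cap C_3$ is then an independent set, the same count propagates unchanged and gives at most $2e_1+2$ edges between $C_{2k}$ and $C_{2k+1}$, while $C_{2k+1}$ (also an independent set in $P$) needs at least $2n-2$ of them. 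Hence $e_1\ge n-2$: concentrating all $H$-edges at one end requires $\pi(H)\ge n-2$, far above the claimed threshold $\frac{n-1}{2}$. (This is exactly the counting argument the paper uses for the necessity direction of Theorem \ref{main-odd}(i) and in Lemma \ref{lemmapath2}.) To reach $\frac{n-1}{2}$ you must distribute linear forests over all odd-indexed copies --- in the tight case $m=3$, roughly $\frac{n-1}{2}$ edges in each of $C_1$ and $C_3$ --- which your own closing heuristic (the two end loops summing to $n-1$) correctly predicts, in contradiction with the construction you describe above it. The repair is the open-trail analogue of Theorem \ref{multiple}: build a connected multiple of $P_{2k+1}$ with $d(u_1)=d(u_{2k+1})=2n-1$, $\ell(u_1)+\ell(u_{2k+1})=n-1$, all other degrees equal to $2n$ and each loop count bounded by $\pi(H)$, take an open Eulerian trail from $u_1$ to $u_{2k+1}$, and substitute the components of the spanning linear forests at the successive occurrences of each vertex; this is precisely what the paper develops in Lemmas \ref{lemmapath} and \ref{lemmapath2} to prove Theorem \ref{main-odd}(ii).
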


Note that the first two statements are in some sense necessary. Let $G$ be only a path with $n$ vertices, i.e. $G=P_n$. Clearly, $P_2[H]$ is hamiltonian. If $n\geq 4$ is even, then $P_n[H]$ is hamiltonian if and only if $||H||\geq 1$ because if $H$ has no edge, then $P_n[H]$ cannot have a hamiltonian cycle.

\begin{figure}[ht]
\begin{center}
\includegraphics[width=6cm]{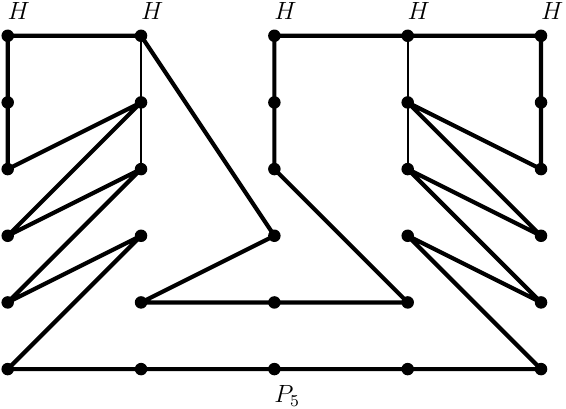}
\end{center}\caption{Hamiltonian cycle in $P_5[P_3+3K_1]$ (bold edges)}\label{P5}
\end{figure}  

\bigskip

But the last two statements are not necessary. For example, take the graph~$H$ on 6 vertices with $\pi(H)=2$ (e.g. the graph $H=P_3+3K_1$). Thus this graph does not satisfy the conditions $(iii)$ and $(iv)$ in the previous theorem. If $G=P_3$, then $G[H]$ is neither hamiltonian nor traceable (see the proof of Theorem \ref{main-odd}). But if we instead of $P_3$ take $P_5$ as a graph $G$, then $G[H]$ is hamiltonian (see Figure~\ref{P5}, edges of $P_5[P_3+3K_1]$ between consecutive copies of $H$ are missing for the clarity). For longer odd paths $G=P_{2k+1}$, $k\geq 3$, the lexicographic product $G[H]$ is also hamiltonian.  

\section{Proofs}
Let $P_{2k+1}$ be a path with odd number of vertices consecutively denoted by $u_1, u_2,...,u_{2k+1}$, $k\geq 1$, and edges denoted by $e_m$ where $e_m=u_mu_{m+1}$, $m=1,2,...,2k$.

\bigskip

\noindent
\textbf{\emph{Proof of Theorem \ref{main-odd}}}

\noindent
\textbf{\emph{(i)}}
~First suppose that $\pi(H_1)\geq 1$, $\pi(H_{2k+1})\geq 1$ and $\sum^k_{i=0}\pi(H_{2i+1})\geq n$. 
Clearly, we have $n\geq 2$ because of $\pi(H_1)\geq 1$. Now we find a connected $2n$-regular multiple of $P_{2k+1}$. Then we prove the hamiltonicity of $P_{2k+1}[H_1, H_2,...,H_{2k+1}]$ similarly as in \cite{Kri}.

We define the number of loops at each vertex $u_i$ of multiple $G'$ of $P_{2k+1}$. For even vertices $u_2,u_4,...,u_{2k}$ we define $\ell_{G'}(u_{2i})=0$, $i=1,2,...,k$, and for odd vertices $\ell_{G'}(u_{2i+1})=\pi(H_{2i+1})$, $i=0,1,...,k$. If $\sum^k_{i=0} \pi(H_{2i+1}) > n$ (the multiple $G'$ has more than $n$ loops), then we remove arbitrary loops from $G'$ in such a way that $\ell_{G'}(u_1)\geq 1$, $\ell_{G'}(u_{2k+1})\geq 1$ and $\sum^k_{i=0} \ell_{G'}(u_{2i+1}) = n$. Note that $\ell_{G'}(u_j) \leq \pi(H_j)$ for $j=1,2,...,2k+1$.

Now we define the multiplicity of every edge $e_m$ of $P_{2k+1}$, $m=1,2,...,2k$, 

$$m_{G'}(e_1)=2n-2\ell_{G'}(u_1)$$ 
$$m_{G'}(e_2)=2\ell_{G'}(u_1)$$
$$m_{G'}(e_3)=2n-2\ell_{G'}(u_1)-2\ell_{G'}(u_3)$$ 
$$m_{G'}(e_4)=2\ell_{G'}(u_1) + 2\ell_{G'}(u_3)$$
$$m_{G'}(e_5)=2n-2\ell_{G'}(u_1)-2\ell_{G'}(u_3)-2\ell_{G'}(u_5)$$ 
$$m_{G'}(e_6)=2\ell_{G'}(u_1) + 2\ell_{G'}(u_3)+2\ell_{G'}(u_5)$$

\noindent
in general we have, $i=1,2,...,k$:
$$m_{G'}(e_{2i})=\sum^i_{j=1} 2\ell_{G'}(u_{2j-1}) \mbox{~~ and ~~}
m_{G'}(e_{2i-1})=2n-\sum^i_{j=1} 2\ell_{G'}(u_{2j-1}).$$

Clearly, by the construction the multiplicity of every edge is at least 2, and the degree of every vertex $u_1, u_2,...,u_{2k}$ is exactly $2n$ and for the last vertex of our path we have: 

$$d_{G'}(u_{2k+1})= 2\ell_{G'}(u_{2k+1})+m_{G'}(e_{2k})= 2\ell_{G'}(u_{2k+1})+\sum^k_{j=1} 2\ell_{G'}(u_{2j-1})=$$$$2\sum^k_{i=0} \ell_{G'}(u_{2i+1}) = 2n.$$

Now we prove that the graph $P_{2k+1}[H_1,H_2,...,H_{2k+1}]$ contains a hamiltonian cycle using exactly $\ell_{G'}(u_i)$ edges of $H_i$ and exactly $m_{G'}(e_j)$ edges between \newline $V(P_{2k+1}(\{u_j\})[H_j])$ and $V(P_{2k+1}(\{u_{j+1}\})[H_{j+1}])$ for $i=1,2,...,2k+1$ and $j=1,2,...,2k$. 

For every vertex $u_i$ and every graph $H_i$, there exists a spanning linear subforest of $P_{2k+1}(\{u_i\})[H_i]\cong H_i$ with components $P_1(u_i),P_2(u_i),...,P_{j_i}(u_i)$ satisfying 

$$j_i=|H_i|-\ell_{G'}(u_i) \hskip 5mm \mbox{ and } \hskip 5mm \sum^{j_i}_{t=1}||P_t(u_i)||=\ell_{G'}(u_i),$$ because $\ell_{G'}(u_i) \leq \pi(H_i)$.

\begin{figure}[ht]
\begin{center}
\includegraphics[width=10cm]{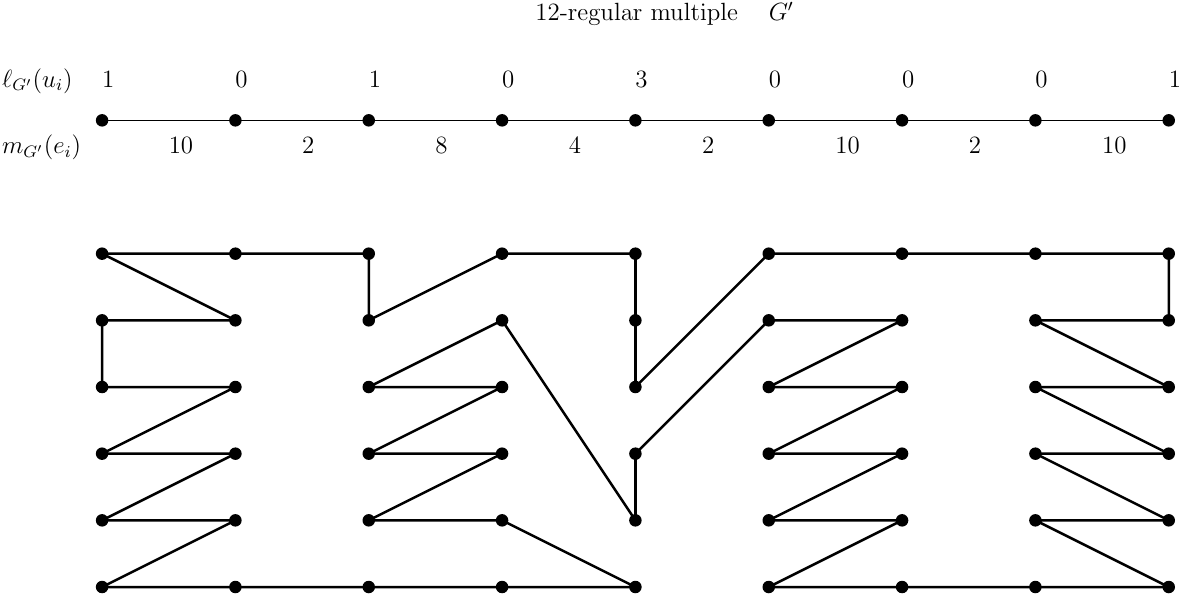}
\end{center}\caption{Hamiltonian cycle in $P_9[H_1,H_2,...,H_9]$ from a multiple $G'$}
\label{P9}
\end{figure} 

Futhermore, after removing all the loops from multiple $G'$, there exists a closed eulerian trail $C$ in the graph $G'$. We obtain the hamiltonian cycle of $P_{2k+1}[H_1, H_2,...,H_{2k+1}]$ as required by replacing simultaneously the vertices $u_i$ at their $t$-th occurence in $C$ by the component $P_t(u_i)$ for $t=1,2,...,j_i$ and $i=1,2,...,2k+1$ (for illustration see Figure \ref{P9}).   

\bigskip

Now suppose that $P_{2k+1}[H_1, H_2,...,H_{2k+1}]$ is hamiltonian. If $||H_1||=0$ or $||H_{2k+1}||=0$, then $P_{2k+1}[H_1, H_2,...,H_{2k+1}]$ cannot contain a hamiltonian cycle. Hence assume that $\pi(H_1) \geq 1$, $\pi(H_{2k+1}) \geq 1$ and $P_{2k+1}[H_1, H_2,...,H_{2k+1}]$ has a hamiltonian cycle $C$. Note that if $H_i$ has at most $\pi(H_i)$ edges in linear forest, then the number of components of a linear forest of $H_i$ is at least $n-\pi(H_i)$. Now we count the number of edges between graphs $H_1, H_2,...,H_{2k+1}$ in $C$. 

The graph $H_1$ has at least $n-\pi(H_1)$ components. Therefore there are at least $2(n-\pi(H_1))$ edges between $H_1$ and $H_2$ in $C$. 

Since $H_2$ has only $n$ vertices, there are at most $2n$ edges in $C$ from $H_2$. Thus, there are at most $2n-2(n-\pi(H_1))=2\pi(H_1)$ edges between $H_2$ and $H_3$ in $C$.

Again, $H_3$ has at least $n-\pi(H_3)$ components. Therefore there are at least $2(n-\pi(H_3))-2\pi(H_1)=2n-2\pi(H_3)-2\pi(H_1)$ edges between $H_3$ and $H_4$ in $C$. 

Since $H_4$ has only $n$ vertices, there are at most $2n$ edges in $C$ from $H_4$. Thus, there are at most $2n-(2n-2\pi(H_3)- 2\pi(H_1))=2\pi(H_3) + 2\pi(H_1)$ edges between $H_4$ and $H_5$ in $C$.

If we continue step by step, we get that between $H_{2k}$ and $H_{2k+1}$ there are at most $2\pi(H_{2k-1})+2\pi(H_{2k-3})+\cdots+2\pi(H_3)+2\pi(H_1)$ edges in $C$ and from the other side $H_{2k+1}$ has at least $n-\pi(H_{2k+1})$ components. Therefore there should be at least $2(n-\pi(H_{2k+1}))$ edges between $H_{2k+1}$ and $H_{2k}$ in $C$. Now we get that  

$$2(n-\pi(H_{2k+1}))\leq 2\pi(H_{2k-1})+2\pi(H_{2k-3})+\cdots+2\pi(H_3)+2\pi(H_1)$$ 

$$n\leq\pi(H_{2k+1})+\pi(H_{2k-1})+\pi(H_{2k-3})+\cdots+\pi(H_3)+\pi(H_1)$$

$$n\leq\sum^k_{i=0}\pi(H_{2i+1}).$$
Thus we finish the proof of Theorem \ref{main-odd} (i). $~~~~~~~~~~~~~~~~~~~~~~~~~~~~~~~~~~~~~~~~~~~~~~~~~~~~~~~~~~~~~~~~~~~ \square$

\vskip 2cm

Before the proofs of Theorem \ref{main-odd} statements (ii) and (iii), we define functions $A(t)$, $B(t)$ and state the following lemmas. Let $a,b,t\in\{1,2,...,2k+1\}$. 

 $$A(t)=0 \mbox{ for } t<a; ~~~ B(t)=0 \mbox{ for } t<b;$$
 $$A(t)=1 \mbox{ for } t\geq a; ~~~ B(t)=1 \mbox{ for } t\geq b.$$ 

\bigskip
 
\begin{lemma}
 \label{lemmapath}
Let $P_{2k+1}$ be a path with odd number of vertices, $k\geq 1$, $a,b\in\{1,2,...,2k+1\}$. Let $H_1,H_2,...,H_{2k+1}$ be graphs with $n$ vertices such that one of the following conditions holds
\begin{itemize}
 \item[(I)] $a,b$ are even and $\pi(H_1)\geq 2$, $\pi(H_{2k+1})\geq 2$ and $\sum^k_{i=0}\pi(H_{2i+1})=n+1$;
 \item[(II)] $a$ is odd, $b$ is even and $\pi(H_1)\geq 1$, $\pi(H_{2k+1})\geq 1$ and $\sum^k_{i=0}\pi(H_{2i+1})=n$;
 \item[(III)] $a,b$ are odd and $\pi(H_1)\geq 1$, $\pi(H_{2k+1})\geq 1$ and $\sum^k_{i=0}\pi(H_{2i+1})=n-1$; 
 
 moreover for $a=1$, $b=2k+1$ we have only $\sum^k_{i=0}\pi(H_{2i+1})=n-1$.
\end{itemize} 
 
Then there exists a connected multiple $G'$ of $P_{2k+1}$ such that $d_{G'}(u_l)=2n$ for $l\in\{1,2,...,2k+1\}\setminus \{a,b\}$ and either $d_{G'}(u_a)=d_{G'}(u_b)=2n-1$ if $a\neq b$ or $d_{G'}(u_a)=2n-2$ if $a=b$.   
\end{lemma}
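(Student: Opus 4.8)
The plan is to construct $G'$ explicitly, imitating the $2n$-regular multiple built in the proof of Theorem~\ref{main-odd}(i) but perturbing the edge multiplicities so that the degrees at $u_a$ and $u_b$ each drop by one (or the degree at $u_a$ drops by two when $a=b$). Since the hypotheses control only $\pi(H_1),\pi(H_{2k+1})$ and the sum $\sum_{i=0}^k\pi(H_{2i+1})$ over the odd-indexed graphs, I would put \emph{all} loops on the odd vertices, setting $\ell_{G'}(u_{2i+1})=\pi(H_{2i+1})$ for $i=0,\dots,k$ and $\ell_{G'}(u_{2i})=0$ for $i=1,\dots,k$. The total number of loops is then $T=\sum_{i=0}^k\pi(H_{2i+1})$, which by hypothesis equals $n+1$, $n$ or $n-1$ in cases (I), (II), (III) respectively; note $\ell_{G'}(u_j)\le\pi(H_j)$ for every $j$ and $\ell_{G'}(u_1),\ell_{G'}(u_{2k+1})\ge 1$ (indeed $\ge 2$ in case (I)) by the boundary conditions. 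After possibly using the symmetry $u_l\mapsto u_{2k+2-l}$ of $P_{2k+1}$, which preserves parity of positions, I may assume $a\le b$; case (II) is exactly the mixed-parity situation.

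Writing $\Pi_i=\sum_{j=1}^i\pi(H_{2j-1})$ for the cumulative loop count, I would define the multiplicities by the telescoping formula of Theorem~\ref{main-odd}(i), namely $m_{G'}(e_{2i-1})=2n-2\Pi_i$ and $m_{G'}(e_{2i})=2\Pi_i$, corrected by terms built from the indicator functions $A$ and $B$. Concretely, each defect switches a $\mp1$ correction on at its position, so that on the edges lying strictly between $u_a$ and $u_b$ the even/odd multiplicities are shifted by $\mp1$, and beyond both defects (cases (I) and (III)) by $\mp2$; the factor $A(m)-B(m)$ records which edges lie in the active region. The verification that $d_{G'}(u_l)=2n$ for $l\notin\{a,b\}$, while $d_{G'}(u_a)=d_{G'}(u_b)=2n-1$ (resp.\ $d_{G'}(u_a)=2n-2$ when $a=b$), is then the same local cancellation as in Theorem~\ref{main-odd}(i), with $A(m)-B(m)$ toggling the correction exactly at $a$ and $b$.

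The one global check is the degree at the last vertex $u_{2k+1}$. Here I would use the identity $\sum_{l\ \mathrm{odd}}d_{G'}(u_l)-\sum_{l\ \mathrm{even}}d_{G'}(u_l)=2T$, valid whenever every loop sits on an odd vertex: since $P_{2k+1}$ is properly $2$-colored by parity, each edge $e_m$ contributes once to each side and the edge contributions cancel, leaving only $2T$. Substituting the target degrees gives $2T=2n-D_{\mathrm{odd}}+D_{\mathrm{even}}$, where $D_{\mathrm{odd}},D_{\mathrm{even}}$ are the total degree defects on odd and even positions; this returns $T=n+1,\,n,\,n-1$ in the three cases and so confirms $d_{G'}(u_{2k+1})$ consistently. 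In particular, this identity is precisely why the three cases demand the three different values of $\sum_{i=0}^k\pi(H_{2i+1})$.

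The main obstacle is connectivity, that is, checking $m_{G'}(e_m)\ge 1$ for every $m$; this is where the boundary hypotheses are consumed. The binding edges lie near the two ends and near the defects. Using $\Pi_k=T-\pi(H_{2k+1})$, every odd edge has multiplicity at least $2n-2\Pi_k=2\pi(H_{2k+1})+2(n-T)$, while the even edge carrying the defect farther from $u_1$ can be as small as $2\pi(H_1)-2$. In case (I), where $T=n+1$, these worst cases are $2\pi(H_{2k+1})-2$ and $2\pi(H_1)-2$, so positivity forces $\pi(H_{2k+1})\ge 2$ and $\pi(H_1)\ge 2$; in cases (II) and (III), where $T\le n$, the shift $2(n-T)\ge 0$ makes the bounds slacker and the hypotheses $\pi(H_1),\pi(H_{2k+1})\ge 1$ already give $m_{G'}(e_m)\ge 1$ throughout. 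I would organize this as a short case analysis on the parities of $a$ and $b$, finally treating the degenerate situations $a=b$ (a single $-2$ defect realized by a $\mp2$ correction emanating from one vertex) and $a=1,\,b=2k+1$ (defects at both endpoints, forcing $\sum_{i=0}^k\pi(H_{2i+1})=n-1$) separately.
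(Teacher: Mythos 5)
Your construction coincides with the paper's own proof: all loops are placed on the odd vertices with $\ell_{G'}(u_{2i+1})=\pi(H_{2i+1})$, the edge multiplicities are the same telescoping values corrected by the step functions $A$ and $B$ with signs governed by the parities of $a$ and $b$, the positivity check consumes $\pi(H_1),\pi(H_{2k+1})\geq 2$ (resp.\ $\geq 1$) in exactly the same way, and the degenerate cases $a=b$ and $a=1,\,b=2k+1$ are set aside just as in the paper. The only cosmetic difference is that you deduce $d_{G'}(u_{2k+1})$ from the global identity $\sum_{l\,\mathrm{odd}}d_{G'}(u_l)-\sum_{l\,\mathrm{even}}d_{G'}(u_l)=2T$ rather than computing it directly, which is equally valid once the other $2k$ degrees are verified.
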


\begin{proof} 
(I) $a,b$ are even.  We have $n\geq 3$ because of $\pi(H_1)\geq 2$.

Similarly, as in the previous proof we define the number of loops at each vertex $u_i$ of multiple $G'$ of $P_{2k+1}$. For even vertices $u_2,u_4,...,u_{2k}$ we define $\ell_{G'}(u_{2i})=0$, $i=1,2,...,k$, and for odd vertices $\ell_{G'}(u_{2i+1})=\pi(H_{2i+1})$, $i=0,1,...,k$. Note that we have $\ell_{G'}(u_1)\geq 2$, $\ell_{G'}(u_{2k+1})\geq 2$ and $\sum^k_{i=0} \ell_{G'}(u_{2i+1})=n+1$.

\bigskip

Now we define the multiplicity of every edge $e_m$ of $P_{2k+1}$, $m=1,2,...,2k$,
$$m_{G'}(e_1)=2n-2\ell_{G'}(u_1)+A(1)+B(1)$$ 
$$m_{G'}(e_2)=2\ell_{G'}(u_1)-A(2)-B(2)$$
$$m_{G'}(e_3)=2n-2\ell_{G'}(u_1)-2\ell_{G'}(u_3)+A(3)+B(3)$$
$$m_{G'}(e_4)=2\ell_{G'}(u_1)+2\ell_{G'}(u_3)-A(4)-B(4)$$

\noindent
in general we have:
$$m_{G'}(e_{2i})=\sum^i_{j=1} 2\ell_{G'}(u_{2j-1})-A(2i)-B(2i) \mbox{~~~~~ and }$$  
$$m_{G'}(e_{2i-1})=2n-\sum^i_{j=1} 2\ell_{G'}(u_{2j-1})+A(2i-1)+B(2i-1), \mbox{ for } i=1,2,...,k.$$

Clearly, by the construction, the multiplicity of every edge is at least 2 and the degree of every vertex of $G'$ is the following:
$$d_{G'}(u_{1})=2\ell_{G'}(u_1)+m_{G'}(e_1)=2\ell_{G'}(u_1)+2n-2\ell_{G'}(u_1)+A(1)+B(1)=2n.$$
Clearly, $A(1)=B(1)=0$.

\bigskip

$$d_{G'}(u_{2i})=m_{G'}(e_{2i-1})+m_{G'}(e_{2i})=$$
$$2n-\sum^i_{j=1} 2\ell_{G'}(u_{2j-1})+A(2i-1)+B(2i-1)+\sum^i_{j=1} 2\ell_{G'}(u_{2j-1})-A(2i)-B(2i)=$$
$$2n+A(2i-1)+B(2i-1)-A(2i)-B(2i),\mbox{ for } i=1,2,...,k.$$

\noindent
Then $d_{G'}(u_{2i})=2n$ for $2i\notin\{a,b\}$, $d_{G'}(u_{2i})=2n-1$ for $2i\in\{a,b\}$, $a\neq b$, and $d_{G'}(u_{2i})=2n-2$ for $2i=a=b$.
$$d_{G'}(u_{2i+1})=m_{G'}(e_{2i})+m_{G'}(e_{2i+1})+2\ell_{G'}(u_{2i+1})=
\sum^i_{j=1} 2\ell_{G'}(u_{2j-1})-$$$$A(2i)-B(2i)+2n - \sum^{i+1}_{j=1} 2\ell_{G'}(u_{2j-1})+A(2i+1)+B(2i+1)+2\ell_{G'}(u_{2i+1})=$$
$$2n-A(2i)-B(2i)+A(2i+1)+B(2i+1)=2n,\mbox{ for } i=1,2,...,k-1.$$

\noindent
Note that $A(2i)=A(2i+1)$, $B(2i)=B(2i+1)$.

$$d_{G'}(u_{2k+1})=m_{G'}(e_{2k})+2\ell_{G'}(u_{2k+1})=\sum^{k+1}_{j=1} 2\ell_{G'}(u_{2j-1})-A(2k)-B(2k)=$$
$$2(n+1)-A(2k)-B(2k)=2n.$$
Clearly, $A(2k)=B(2k)=1$.

Resulting $G'$ is the connected multiple of $P_{2k+1}$ as required.

\bigskip

\noindent
(II) $a$ is odd and $b$ is even. We have $n\geq 2$ because of $\pi(H_1)\geq 1$.

We define the number of loops at each vertex $u_i$ of multiple $G'$ of $P_{2k+1}$ as in (I) such that we have $\ell_{G'}(u_1)\geq 1$, $\ell_{G'}(u_{2k+1})\geq 1$ and $\sum^k_{i=0} \ell_{G'}(u_{2i+1})=n$.

\bigskip

Now we define the multiplicity of every edge $e_m$ of $P_{2k+1}$, $m=1,2,...,2k$,

$$m_{G'}(e_{2i})=\sum^i_{j=1} 2\ell_{G'}(u_{2j-1})+A(2i)-B(2i) \mbox{~~~~ and }$$  
$$m_{G'}(e_{2i-1})=2n-\sum^i_{j=1} 2\ell_{G'}(u_{2j-1})-A(2i-1)+B(2i-1), \mbox{ for } i=1,2,...,k.$$

Clearly, by the construction the multiplicity of every edge is at least 1 and the degree of every vertex of $G'$ is the following:
$$d_{G'}(u_{1})=2\ell_{G'}(u_1)+m_{G'}(e_1)=2\ell_{G'}(u_1)+2n-2\ell_{G'}(u_1)-A(1)+B(1)=2n-A(1).$$

\noindent
Clearly, $B(1)=0$, $d_{G'}(u_1)=2n$ for $a\neq 1$ and $d_{G'}(u_1)=2n-1$ for $a=1$.

\bigskip

$$d_{G'}(u_{2i})=m_{G'}(e_{2i-1})+m_{G'}(e_{2i})=$$
$$2n-\sum^i_{j=1}2\ell_{G'}(u_{2j-1})-A(2i-1)+B(2i-1)+\sum^i_{j=1}2\ell_{G'}(u_{2j-1})+A(2i)-B(2i)=$$
$$2n-A(2i-1)+B(2i-1)+A(2i)-B(2i)=$$$$2n+B(2i-1)-B(2i), \mbox{ for } i=1,2,...,k.$$

\noindent
Then $A(2i-1)=A(2i)$, $d_{G'}(u_{2i})=2n$ for $2i\neq b$ and $d_{G'}(u_{2i})=2n-1$ for $2i=b$.

\bigskip

$$d_{G'}(u_{2i+1})=m_{G'}(e_{2i})+m_{G'}(e_{2i+1})+2\ell_{G'}(u_{2i+1})=$$
$$\sum^i_{j=1} 2\ell_{G'}(u_{2j-1})+A(2i)-B(2i)+$$$$2n-\sum^{i+1}_{j=1} 2\ell_{G'}(u_{2j-1})-A(2i+1)+B(2i+1)+2\ell_{G'}(u_{2i+1})=$$ 
$$2n+A(2i)-B(2i)-A(2i+1)+B(2i+1)=$$$$2n+A(2i)-A(2i+1), \mbox{ for } i=1,2,...,k-1.$$

\noindent
Then $B(2i)=B(2i+1)$, $d_{G'}(u_{2i+1})=2n$ for $2i+1\neq a$ and $d_{G'}(u_{2i+1})=2n-1$ for $2i+1=a$.

\bigskip

$$d_{G'}(u_{2k+1})=m_{G'}(e_{2k})+2\ell_{G'}(u_{2k+1})=$$$$\sum^{k+1}_{j=1} 2\ell_{G'}(u_{2j-1})+A(2k)-B(2k)=2n+A(2k)-B(2k).$$

\noindent
Clearly, $B(2k)=1$, $d_{G'}(u_{2k+1})=2n$ for $a\neq 2k+1$ and $d_{G'}(u_{2k+1})=2n-1$ for $a=2k+1$.

\bigskip

Resulting $G'$ is the connected multiple of $P_{2k+1}$ as required.

\bigskip

\noindent
(III) $a,b$ are odd. From $\pi(H_1)\geq 1$, $\pi(H_{2k+1})\geq 1$ and $\sum^k_{i=0}\pi(H_{2i+1})=n-1$ we have $n>2$.

We define the number of loops at each vertex $u_i$ of multiple $G'$ of $P_{2k+1}$ as in~(I) such that we have $\ell_{G'}(u_1)\geq 1$, $\ell_{G'}(u_{2k+1})\geq 1$, and $\sum^k_{i=0} \ell_{G'}(u_{2i+1})=n-1$.

\bigskip

Now we define the multiplicity of every edge $e_m$ of $P_{2k+1}$, $m=1,2,...,2k$,
$$m_{G'}(e_{2i})=\sum^i_{j=1} 2\ell_{G'}(u_{2j-1})+A(2i)+B(2i) \mbox{~~ and} $$  
$$m_{G'}(e_{2i-1})=2n-\sum^i_{j=1} 2\ell_{G'}(u_{2j-1})-A(2i-1)-B(2i-1), \mbox{ for } i=1,2,...,k.$$

Clearly, by the construction the multiplicity of every edge is at least 2 and the degree of every vertex of $G'$ is the following:
$$d_{G'}(u_{1})=2\ell_{G'}(u_1)+m_{G'}(e_1)=2\ell_{G'}(u_1)+2n-2\ell_{G'}(u_1)-A(1)-B(1)=$$$$2n-A(1)-B(1).$$

\noindent
Then $d_{G'}(u_1)=2n$ for $1\notin\{a,b\}$, $d_{G'}(u_1)=2n-1$, for $1\in\{a,b\}$, $a\neq b$, and $d_{G'}(u_1)=2n-2$ for $1=a=b$.

\bigskip

$$d_{G'}(u_{2i})=m_{G'}(e_{2i-1})+m_{G'}(e_{2i})=$$
$$2n-\sum^i_{j=1} 2\ell_{G'}(u_{2j-1})-A(2i-1)-B(2i-1)+\sum^i_{j=1} 2\ell_{G'}(u_{2j-1})+A(2i)+B(2i)=$$
$$2n-A(2i-1)-B(2i-1)+A(2i)+B(2i)=2n, \mbox{ for } i=1,2,...,k.$$

\noindent
Note that $A(2i-1)=A(2i)$, $B(2i-1)=B(2i)$.

\bigskip

$$d_{G'}(u_{2i+1})=m_{G'}(e_{2i})+m_{G'}(e_{2i+1})+2\ell_{G'}(u_{2i+1})=$$
$$\sum^i_{j=1} 2\ell_{G'}(u_{2j-1})+A(2i)+B(2i)+$$$$2n-\sum^{i+1}_{j=1} 2\ell_{G'}(u_{2j-1})-A(2i+1)-B(2i+1)+2\ell_{G'}(u_{2i+1})=$$
$$2n+A(2i)+B(2i)-A(2i+1)-B(2i+1), \mbox{ for } i=1,2,...,k-1.$$

\noindent
Then $d_{G'}(u_{2i+1})=2n$ for $2i+1\notin\{a,b\}$, $d_{G'}(u_{2i+1})=2n-1$ for $2i+1\in\{a,b\}$, $a\neq b$, and $d_{G'}(u_{2i+1})=2n-2$ for $2i+1=a=b$.

\bigskip

$$d_{G'}(u_{2k+1})=m_{G'}(e_{2k})+2\ell_{G'}(u_{2k+1})=\sum^{k+1}_{j=1} 2\ell_{G'}(u_{2j-1})+A(2k)+B(2k)=$$
$$=2(n-1)+A(2k)+B(2k)=2n-2+A(2k)+B(2k).$$

\noindent
Then $d_{G'}(u_{2k+1})=2n$ for $2k+1\notin\{a,b\}$, $d_{G'}(u_{2k+1})=2n-1$ for $2k+1\in\{a,b\}$, $a\neq b$, and $d_{G'}(u_{2k+1})=2n-2$ for $2k+1=a=b$.

\vskip 2cm

Now let $a=1$ and $b=2k+1$. From $\sum^k_{i=0}\pi(H_{2i+1})=n-1$ we have $n\geq 1$ and again we define the number of loops at each vertex $u_i$ of multiple $G'$ of $P_{2k+1}$ as in (I) such that $\sum^k_{i=0} \ell_{G'}(u_{2i+1})=n-1$. Clearly, $A(j)=1$ for $j=1,2,...,2k+1$, $B(j)=0$ for $j=1,2,...,2k$ and $B(2k+1)=1$.

As in general case, we define specifically the multiplicity of every edge $e_m$ of $P_{2k+1}$, $m=1,2,...,2k$,

$$m_{G'}(e_{2i})=\sum^i_{j=1} 2\ell_{G'}(u_{2j-1})+1 \mbox{ for } i=1,2,...,k,$$  
$$m_{G'}(e_{2i-1})=2n-\sum^i_{j=1} 2\ell_{G'}(u_{2j-1})-1 \mbox{ for } i=1,2,...,k.$$

Clearly, by the construction the multiplicity of every edge is at least 1 and the degree of every vertex of $G'$ is the following:

$$d_{G'}(u_{1})=2\ell_{G'}(u_1)+m_{G'}(e_1)=2\ell_{G'}(u_1)+2n-2\ell_{G'}(u_1)-1=2n-1.$$

\bigskip

$$d_{G'}(u_{2i})=m_{G'}(e_{2i-1})+m_{G'}(e_{2i})=$$$$2n-\sum^i_{j=1} 2\ell_{G'}(u_{2j-1})-1+\sum^i_{j=1} 2\ell_{G'}(u_{2j-1})+1=2n,\mbox{ for } i=1,2,...,k.$$

\vskip 1cm

$$d_{G'}(u_{2i+1})=m_{G'}(e_{2i})+m_{G'}(e_{2i+1})+2\ell_{G'}(u_{2i+1})=$$$$\sum^i_{j=1} 2\ell_{G'}(u_{2j-1})+1+2n-\sum^{i+1}_{j=1} 2\ell_{G'}(u_{2j-1})-1+2\ell_{G'}(u_{2i+1})=2n,$$$$ \mbox{ for } i=1,2,...,k-1.$$

\bigskip

$$d_{G'}(u_{2k+1})=m_{G'}(e_{2k})+2\ell_{G'}(u_{2k+1})=\sum^{k+1}_{j=1} 2\ell_{G'}(u_{2j-1})+1=2(n-1)+1=2n-1.$$

In both cases resulting $G'$ is the connected multiple of $P_{2k+1}$ as required.
\end{proof}

\bigskip

\begin{lemma}
\label{lemmapath2}
Let $P_{2k+1}$ be a path with odd number of vertices, $k\geq 1$, $a,b\in\{1,2,...,2k+1\}$. Let $H_1,H_2,...,H_{2k+1}$ be graphs with $n$ vertices.
Assume that $P_{2k+1}[H_1, H_2,...,H_{2k+1}]$ contains a hamiltonian path $P$ starting in vertex $x$ from $V(P_{2k+1}(\{u_a\})[H_a])$ and ending in a~vertex $y$ from $V(P_{2k+1}(\{u_b\})[H_b])$, where $u_a, u_b\in V(P_{2k+1})$.
\begin{itemize}
\item[(I)] If $a,b$ are even, then $\sum^k_{i=0}\pi(H_{2i+1})\geq n+1$;
\item[(II)] If $a$ is odd and $b$ is even, then $\sum^k_{i=0}\pi(H_{2i+1})\geq n$;
\item[(III)] If $a,b$ are odd, then $\sum^k_{i=0}\pi(H_{2i+1})\geq n-1$.
\end{itemize}
\end{lemma}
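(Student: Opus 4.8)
The plan is to establish the three inequalities as necessary conditions by the same edge-counting idea used for the hamiltonicity part of Theorem \ref{main-odd}(i), adapted to the fact that a hamiltonian \emph{path} has two degree-one endpoints rather than all degree-two vertices. Write $G=P_{2k+1}[H_1,\dots,H_{2k+1}]$ and abbreviate the copy $V(P_{2k+1}(\{u_i\})[H_i])$ simply by $H_i$. For the given hamiltonian path $P$, I would introduce three quantities for each $i$: the number $e_i$ of edges of $P$ lying inside $H_i$, the number $f_i$ of edges of $P$ joining $H_i$ to $H_{i+1}$ (with the convention $f_0=f_{2k+1}=0$), and $\delta_i\in\{0,1,2\}$, the number of the two endpoints $x,y$ of $P$ that belong to $H_i$. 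First I would note that the edges of $P$ inside $H_i$ form a spanning linear forest of $H_i$ (every vertex has $P$-degree at most $2$ and there is no cycle), so $e_i\le\pi(H_i)$.

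The next step is a degree count inside each copy. Every vertex of $G$ has degree $2$ on $P$ except $x$ and $y$, which have degree $1$; summing $P$-degrees over the $n$ vertices of $H_i$ therefore gives $2n-\delta_i$. Splitting this sum into internal and crossing contributions yields the identity
\[
 f_{i-1}+f_i = 2n-\delta_i-2e_i, \qquad i=1,2,\dots,2k+1.
\]
I would then exploit the path structure through an alternating (telescoping) sum. Forming $\sum_{i=1}^{2k+1}(-1)^{i+1}(f_{i-1}+f_i)$, each crossing variable $f_j$ with $1\le j\le 2k$ occurs once with coefficient $(-1)^{j+1}$ and once with $(-1)^{j+2}$, so it cancels, while $f_0=f_{2k+1}=0$; hence the whole sum is $0$. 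Substituting the identity and using $\sum_{i=1}^{2k+1}(-1)^{i+1}=1$ gives
\[
 2n = D + 2E, \qquad D=\sum_{i=1}^{2k+1}(-1)^{i+1}\delta_i, \quad E=\sum_{i=1}^{2k+1}(-1)^{i+1}e_i.
\]
Since $E=\sum_{i\text{ odd}}e_i-\sum_{i\text{ even}}e_i\le\sum_{i\text{ odd}}e_i\le\sum_{i=0}^{k}\pi(H_{2i+1})$, I obtain $\sum_{i=0}^{k}\pi(H_{2i+1})\ge E=n-\tfrac{D}{2}$.

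It then remains only to evaluate $D$ in the three parity cases. As $(-1)^{i+1}=+1$ for odd $i$ and $-1$ for even $i$, the value $D$ equals the number of endpoints lying in odd copies minus the number lying in even copies. In case (I) both $a,b$ are even, so $D=-2$ and the bound reads $\sum\ge n+1$; in case (II) one of $a,b$ is odd and the other even, so $D=0$ and $\sum\ge n$; in case (III) both are odd, so $D=2$ and $\sum\ge n-1$. I expect the only genuinely delicate points to be the bookkeeping: checking that the restriction of $P$ to each $H_i$ really is a linear forest so that $e_i\le\pi(H_i)$, handling the boundary copies $H_1$ and $H_{2k+1}$ correctly in the telescoping sum, and computing the parity of $D$. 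If the telescoping cancellation is not noticed, the same conclusion follows more laboriously by the step-by-step scheme of Theorem \ref{main-odd}(i): propagate a lower bound on each odd $f_i$ and an upper bound on each even $f_i$, now carrying the $\delta_i$-corrections, and compare the two bounds available for $f_{2k}$.
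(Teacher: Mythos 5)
Your proof is correct, and while it rests on the same underlying count as the paper's argument --- comparing, layer by layer, the edges of $P$ crossing between consecutive copies, using the fact that $P$ restricted to $H_i$ is a spanning linear forest with at most $\pi(H_i)$ edges --- you organize it quite differently. The paper propagates a chain of inequalities along the path (a lower bound on the crossings between $H_{2i-1}$ and $H_{2i}$, then an upper bound on those between $H_{2i}$ and $H_{2i+1}$, with indicator functions $A(t),B(t)$ tracking where the endpoints $x,y$ sit), and closes the loop by comparing the two bounds available for the crossings between $H_{2k}$ and $H_{2k+1}$. You instead record the exact local identity $f_{i-1}+f_i=2n-\delta_i-2e_i$ from a degree count in each copy, take the alternating sum so that all crossing terms telescope away, and obtain the single exact relation $2n=D+2E$, from which all three cases follow at once by evaluating $D\in\{-2,0,2\}$ and bounding $E\le\sum_{i=0}^{k}\pi(H_{2i+1})$. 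Your version buys uniformity (one computation covers cases (I)--(III), including $a=b$) and avoids the ``in general we have'' induction that the paper leaves implicit; the paper's version has the advantage that the intermediate bounds on each $m(e_j)$ are exactly the quantities reused in the constructive direction (Lemma~\ref{lemmapath}), so the two halves of the characterization visibly mirror each other. All the delicate points you flag --- that the restriction of $P$ to $H_i$ is acyclic with maximum degree $2$, that $f_0=f_{2k+1}=0$, and that $\sum_{i=1}^{2k+1}(-1)^{i+1}=1$ --- check out.
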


\bigskip

\begin{proof}
\noindent (I) $a,b$ are even. Clearly, $A(1)=B(1)=0$ and $A(2k)=B(2k)=1$.

The graph $H_1$ has at least $n-\pi(H_1)$ components. Therefore there are at least $2(n-\pi(H_1))+A(1)+B(1)=2(n-\pi(H_1))$ edges between $H_1$ and $H_2$ in $P$. 

Since $H_2$ has only $n$ vertices, there are at most $2n$ edges in $P$ from $H_2$. Thus, there are at most $2n-2(n-\pi(H_1))-A(2)-B(2)=2\pi(H_1)-A(2)-B(2)$ edges between $H_2$ and $H_3$ in $P$. Note that $A(2)=0$ and $B(2)=0$, if $x,y\notin V(P_{2k+1}(\{u_2\})[H_2])$, respectively.

\bigskip

In general we have at most 
$$\sum^i_{j=1} 2\pi(H_{2j-1})-A(2i)-B(2i), \mbox{ for } i=1,2,...,k,$$
edges between $H_{2i}$ and $H_{2i+1}$ in $P$ and we have at least    
$$2n-\sum^i_{j=1} 2\pi(H_{2j-1})+A(2i-1)+B(2i-1), \mbox{ for } i=1,2,...,k,$$ 
edges between $H_{2i-1}$ and $H_{2i}$ in $P$.

The last subgraph $H_{2k+1}$ has $n-\pi(H_{2k+1})$ components in its path covering. Therefore there are at least $2(n-\pi(H_{2k+1}))$ edges between $H_{2k+1}$ and $H_{2k}$ in $P$. Thus we get: 

$$2(n-\pi(H_{2k+1}))\leq\sum^k_{j=1} 2\pi(H_{2j-1})-A(2k)-B(2k)\leq\sum^k_{j=1} 2\pi(H_{2j-1})-2$$

$$2n+A(2k)+B(2k)=2n+2\leq \sum^k_{j=0} 2\pi(H_{2j+1})$$

$$n+1\leq \sum^k_{j=0} \pi(H_{2j+1}).$$

\vskip 2cm

\noindent (II) $a$ is odd and $b$ is even. Clearly, $B(1)=0$ and $B(2k)=1$.

Similarly as in (I), we have at most 
$$\sum^i_{j=1} 2\pi(H_{2j-1})+A(2i)-B(2i), \mbox{ for } i=1,2,...,k,$$
edges between $H_{2i}$ and $H_{2i+1}$ in $P$ and we have at least    
$$2n-\sum^i_{j=1} 2\pi(H_{2j-1})-A(2i-1)+B(2i-1), \mbox{ for } i=1,2,...,k,$$ 
edges between $H_{2i-1}$ and $H_{2i}$ in $P$.

The last subgraph $H_{2k+1}$ has $n-\pi(H_{2k+1})$ components in its path covering. Therefore there are at least $2(n-\pi(H_{2k+1}))-A(2k+1)+A(2k)$ edges between $H_{2k+1}$ and $H_{2k}$ in~$P$. Clearly, $A(2k+1)=1$. Thus we get: 

$$2(n-\pi(H_{2k+1}))-A(2k+1)+A(2k)\leq\sum^k_{j=1} 2\pi(H_{2j-1})+A(2k)-B(2k)$$

$$2n-A(2k+1)+B(2k)=2n\leq \sum^k_{j=0} 2\pi(H_{2j+1})$$

$$n\leq \sum^k_{j=0} \pi(H_{2j+1}).$$

\bigskip

\noindent (III) $a,b$ are odd. 

Again similarly as in (I), we have at most 
$$\sum^i_{j=1} 2\pi(H_{2j-1})+A(2i)+B(2i), \mbox{ for } i=1,2,...,k,$$
edges between $H_{2i}$ and $H_{2i+1}$ in $P$ and we have at least    
$$2n-\sum^i_{j=1} 2\pi(H_{2j-1})-A(2i-1)-B(2i-1), \mbox{ for } i=1,2,...,k,$$
edges between $H_{2i-1}$ and $H_{2i}$ in $P$.

The last subgraph $H_{2k+1}$ has $n-\pi(H_{2k+1})$ components in its path covering. Therefore there should be at least $2(n-\pi(H_{2k+1}))-A(2k+1)+A(2k)-B(2k+1)+B(2k)$ edges between $H_{2k+1}$ and $H_{2k}$ in $P$. Clearly, $A(2k+1)=B(2k+1)=1$. Thus we get: 

$$2(n-\pi(H_{2k+1}))-A(2k+1)+A(2k)-B(2k+1)+B(2k)\leq~~~~~~~~~~~~~~~~~~~~~~~~~~~~~~~`$$
$$~~~~~~~~~~~~~~~~~~~~~~~~~~~~~~~~~~~~~~~~~~~~~~~~~~~~~~~~~~~~~~~~~~~~~~~~~~~~~~~\sum^k_{j=1} 2\pi(H_{2j-1})+A(2k)+B(2k)$$

$$2n-A(2k+1)-B(2k+1)=2n-2\leq \sum^k_{j=0} 2\pi(H_{2j+1})$$

$$n-1\leq \sum^k_{j=0} \pi(H_{2j+1}).$$
\end{proof}

Now we are ready to prove Theorem \ref{main-odd} statements (ii) and (iii).

\bigskip

\textbf{\emph{(ii)}}
~First suppose that $\sum^k_{i=0}\pi(H_{2i+1})\geq n-1$. Let $x,y$ be vertices in $P_{2k+1}(\{u_1\})[H_1]$, $P_{2k+1}(\{u_{2k+1}\})[H_{2k+1}]$ such that vertices of $H_1,H_{2k+1}$ corresponding to $x,y$ are not vertices of degree 2 in some component (path) of a spanning linear forest of $H_1,H_{2k+1}$ with $\pi(H_1),\pi(H_{2k+1})$ edges, respectively. We show that $P_{2k+1}[H_1, H_2,...,H_{2k+1}]$ contains an $xy$-hamiltonian path.

We set $a=1$ and $b=2k+1$. By Lemma \ref{lemmapath} (III), we find a connected multiple $G'$ of $P_{2k+1}$ such that $d_{G'}(u_l)=2n$ for $l\in\{2,3,...,2k\}$ and $d_{G'}(u_1)=d_{G'}(u_{2k+1})=2n-1$. Note that if $\sum^k_{i=0}\pi(H_{2i+1}) >n-1$ (the multiple $G'$ has more than $n-1$ loops), then we remove arbitrary loops from $G'$ in such a way that $\sum^k_{i=0}\ell_{G'}(u_{2i+1})=n-1$. Clearly, $\ell_{G'}(u_j) \leq \pi(H_j)$ for $j=1,2,...,2k+1$.

As before, we prove that $P_{2k+1}[H_1,H_2,...,H_{2k+1}]$ contains an $xy$-hamiltonian path using exactly $\ell_{G'}(u_i)$ edges of $H_i$ and exactly $m_{G'}(e_j)$ edges between \newline $V(P_{2k+1}(\{u_j\})[H_j])$ and $V(P_{2k+1}(\{u_{j+1}\})[H_{j+1}])$ for $i=1,2,...,2k+1$ and $j=1,2,...,2k$. 

For every vertex $u_i$ and every graph $H_i$, there exists a spanning linear sub\-forest of $P_{2k+1}(\{u_i\})[H_i]\cong H_i$ with components $P_1(u_i),P_2(u_i),...,P_{j_i}(u_i)$ satisfying 

$$j_i=|H_i|-\ell_{G'}(u_i) \hskip 5mm \mbox{ and } \hskip 5mm \sum^{j_i}_{t=1}||P_t(u_i)||=\ell_{G'}(u_i),$$ because $\ell_{G'}(u_i) \leq \pi(H_i)$.

Futhermore after removing all the loops from multiple $G'$, there exists an open eulerian trail $C$ in $G'$ from $u_1$ to $u_{2k+1}$ of $P_{2k+1}$. We obtain the $xy$-hamiltonian path $P$ in $P_{2k+1}[H_1, H_2,...,H_{2k+1}]$ as required by replacing simultaneously the vertices $u_i$ at their $t$-th occurence in $C$ by the component $P_t(u_i)$, for $t=1,2,...,j_i$ and $i=1,2,...,2k+1$, such that $x$ is the first vertex and $y$ is the last vertex of~$P$. Note that $x, y$ are endvertices of different paths $P_t(u_i)$ or isolated vertices.  

\medskip

Now we suppose that $P_{2k+1}[H_1, H_2,...,H_{2k+1}]$ contains some hamiltonian path~$P$. We may assume that the hamiltonian path starts in $H_1$ and ends in $H_{2k+1}$. By Lemma \ref{lemmapath2} (III), we get that $\sum^k_{i=0}\pi(H_{2i+1})\geq n-1.$~~~~~~~~~~~~~~~~~~~~~~~~~~~~~~~~~~~~~~~~~~~~~~~~~~~~~~~~~~~~~~~~~~~~~~$\square$

\bigskip

\textbf{\emph{(iii)}}
~First suppose that $\pi(H_1)\geq 2$, $\pi(H_{2k+1})\geq 2$ and $\sum^k_{i=0}\pi(H_{2i+1})\geq n+1$. Let $x,y$ be vertices in $P_{2k+1}(\{u_a\})[H_a]$, $P_{2k+1}(\{u_b\})[H_b]$, $a,b\in\{1,2,...,2k+1\}$, respectively. We show that $P_{2k+1}[H_1, H_2,...,H_{2k+1}]$ contains an $xy$-hamiltonian path for every $x,y$. 

Suppose that $a,b$ are even. By Lemma \ref{lemmapath} (I), we find a connected multiple $G'$ of $P_{2k+1}$ such that $d_{G'}(u_l)=2n$ for $l\in\{1,2,...,2k+1\}\setminus\{a,b\}$ and either $d_{G'}(u_a)=d_{G'}(u_b)=2n-1$ if $a\neq b$ or $d_{G'}(u_a)=2n-2$ if $a=b$. Note that if $\sum^k_{i=0}\pi(H_{2i+1})>n+1$ (the multiple $G'$ has more than $n+1$ loops), then we remove arbitrary loops from $G'$ in such a way that $\ell_{G'}(u_1)\geq 2$, $\ell_{G'}(u_{2k+1})\geq 2$ and $\sum^k_{i=0} \ell_{G'}(u_{2i+1})=n+1$. Clearly, $\ell_{G'}(u_j) \leq \pi(H_j)$ for $j=1,2,...,2k+1$.

Suppose that $a$ is odd and $b$ is even (the case $a$ is even and $b$ is odd is symmetrical). If $x$ is a vertex of degree 2 in some component (path) $P$ of a spanning linear forest of $H_a$ with $\pi(H_a)$ edges, then we remove one edge of $P$ incident with $x$ from this spanning linear forest. Hence we have $\pi(H_1)\geq 1$, $\pi(H_{2k+1})\geq 1$ and $\sum^k_{i=0}\pi(H_{2i+1})\geq n$. By Lemma \ref{lemmapath} (II), we find a connected multiple $G'$ of $P_{2k+1}$ such that $d_{G'}(u_l)=2n$ for $l\in\{1,2,...,2k+1\}\setminus\{a,b\}$ and $d_{G'}(u_a)=d_{G'}(u_b)=2n-1$. Note that if $\sum^k_{i=0}\pi(H_{2i+1})>n$ (the multiple $G'$ has more than $n$ loops), then we remove arbitrary loops from $G'$ in such a way that $\ell_{G'}(u_1)\geq 1$, $\ell_{G'}(u_{2k+1})\geq 1$ and $\sum^k_{i=0} \ell_{G'}(u_{2i+1})=n$. Clearly, $\ell_{G'}(u_j) \leq \pi(H_j)$ for $j=1,2,...,2k+1$.

Suppose that $a,b$ are odd. We remove at most 2 edges from spanning linear forests of $H_a$ and $H_b$ such that now $x$ and $y$ are not vertices of degree 2 in some component (path) of a spanning linear forest of $H_a$ or $H_b$ and $x,y$ are not in the same component (path) of a spaning linear forest of $H_a=H_b$. Hence we have $\pi(H_1)\geq 1$, $\pi(H_{2k+1})\geq 1$ and $\sum^k_{i=0}\pi(H_{2i+1})\geq n-1$ even if $a,b$ are in the same component. By Lemma \ref{lemmapath} (III), we find a connected multiple $G'$ of $P_{2k+1}$ such that $d_{G'}(u_l)=2n$ for $l\in\{1,2,...,2k+1\}\setminus\{a,b\}$ and either $d_{G'}(u_a)=d_{G'}(u_b)=2n-1$ if $a\neq b$ or $d_{G'}(u_a)=2n-2$ if $a=b$. Note that if $\sum^k_{i=0}\pi(H_{2i+1})>n-1$ (the multiple $G'$ has more than $n-1$ loops), then we remove arbitrary loops from $G'$ in such a way that $\ell_{G'}(u_1)\geq 1$, $\ell_{G'}(u_{2k+1})\geq 1$ and $\sum^k_{i=0} \ell_{G'}(u_{2i+1})=n-1$. Clearly, $\ell_{G'}(u_j) \leq \pi(H_j)$ for $j=1,...,2k+1$.

Similarly as in the previous proof, we prove that $P_{2k+1}[H_1,H_2,...,H_{2k+1}]$ contains an $xy$-hamiltonian path using exactly $\ell_{G'}(u_i)$ edges of $H_i$ and exactly $m_{G'}(e_j)$ edges between $V(P_{2k+1}(\{u_j\})[H_j])$ and $V(P_{2k+1}(\{u_{j+1}\})[H_{j+1}])$ for $i=1,2,...,2k+1$, $j=1,2,...,2k$. 

For every vertex $u_i$ and every graph $H_i$, there exists a spanning linear sub\-forest of $P_{2k+1}(\{u_i\})[H_i]\cong H_i$ with components $P_1(u_i),P_2(u_i),...,P_{j_i}(u_i)$ satisfying 

$$j_i=|H_i|-\ell_{G'}(u_i) \hskip 5mm \mbox{ and } \hskip 5mm \sum^{j_i}_{t=1}||P_t(u_i)||=\ell_{G'}(u_i),$$ because $\ell_{G'}(u_i) \leq \pi(H_i)$.

Futhermore after removing all the loops from multiple $G'$, there exists an open eulerian trail $C$ in $G'$ from $u_a$ to $u_b$ if $a\neq b$ and a closed eulerian trail $C$ in $G'$ if $a=b$. We obtain the $xy$-hamiltonian path in $P_{2k+1}[H_1, H_2,...,H_{2k+1}]$ as required by replacing simultaneously the vertices $u_i$ at their $t$-th occurence in $C$ by the component $P_t(u_i)$ for $t=1,2,...,j_i$, $i=1,2,...,2k+1$ such that $x$ is the first vertex and $y$ is the last vertex of $P$. Note that $x, y$ are endvertices of different paths $P_t(u_i)$ or isolated vertices. 

\medskip

Now suppose that $P_{2k+1}[H_1, H_2,...,H_{2k+1}]$ is hamiltonian connected. Clearly, if $\pi(H_1)\leq 1$, $\pi(H_{2k+1})\leq 1$, then there is no hamiltonian path starting and ending in $H_2$, $H_{2k}$, respectively. Thus $\pi(H_1)\geq 2$, $\pi(H_{2k+1}) \geq 2$ and $n>2$ ($P_{2k+1}[H_1, H_2,...,H_{2k+1}]$ has to be 3-connected). Since this graph has a hamilonian path between two arbitrary vertices, by Lemma \ref{lemmapath2}, we immediately get that $\sum^k_{i=0}\pi(H_{2i+1})\geq n+1.$~~~~~~~~~~~~~~~~~~~~~~~~~~~~~~~~~~~~~~~~~~~~~~~~~~~~~~~~~$\square$

\bigskip

\noindent
\textbf{\emph{Proof of Theorem \ref{main-even}}}

\textbf{\emph{(i)}} First assume that the graph $P_{2k}[H_1, H_2,...,H_{2k}]$ has hamiltonian cycle~$C$ and $\pi(H_1) =0$. Since every vertex in $C$ has degree 2 and there is no edge in $E(H_1)$, there are exactly $2n$ edges between $H_1$ and $H_2$. Thus there is no edge of~$C$ between $H_2$ and $H_3$ and we get that $k=1$. Similarly for $\pi(H_{2k}) =0$.

Now assume that $k=1$ or $\pi(H_1)\geq 1$ and  $\pi(H_{2k})\geq 1$. Then the hamiltoni\-city of $P_{2k}[H_1, H_2,...,H_{2k}]$ follows immediately from the proof of Theorem 6 from~\cite{Kri}. The author proved in \cite{Kri} the hamiltonicity of lexicographic product of $G[H]$ where $G$ is traceable by finding a $2n$-regular multiple of $P_{2k}$ which uses only one loop at the first and last vertex of $P_{2k}$. 

\textbf{\emph{(ii)}} Again, the proof is an easy consequence of the proof of Theorem 6 from~\cite{Kri}.

\textbf{\emph{(iii)}} Let $k=1$. If $\pi(H_1)=0$ or $\pi(H_{2})=0$, then clearly, there is no hamiltonian path between some vertices of $H_2$ or $H_{1}$, respectively. If $\pi(H_1)\geq 1$ and $\pi(H_{2k}) \geq 1$, then clearly there exists a hamiltonian path between every two vertices of $P_{2}[H_1, H_2]$.   
 
Let $k>1$. 
First suppose that $P_{2k}[H_1, H_2,...,H_{2k}]$ is hamiltonian connected. If $\pi(H_1) \leq 1$ or $\pi(H_{2k}) \leq 1$, then clearly, there is no hamiltonian path between some two vertices of $H_2$ or $H_{2k-1}$, respectively.

Now assume that $\pi(H_1)\geq 2$ and  $\pi(H_{2k})\geq 2$. Let $x$ be a vertex of \newline $V(P_{2k}(\{u_a\})[H_a])$ and $y$ be a vertex of $V(P_{2k}(\{u_b\})[H_b])$. We may assume that $a \leq b$. Let $F$ be 1-factor of $P_{2k}$, $S_1$ a multigraph with only one vertex $u_1$ and one loop, $S_2$ a multigraph with only vertex $u_{2k}$ and one loop, and let $P' \subseteq P_{2k}$ be a path from $u_a$ to $u_b$ in $P_{2k}$ and $P'' \subseteq P_{2k}$ a path from $u_b$ to $u_{2k}$ in $P_{2k}$ if any. 

\begin{itemize}
\item[(I)] Let $b-a$ be odd. Then $P'$ is an even path and let $F_1$ be 1-factor of $P'$ and set $F_2=P' -E(F_1)$. Then we define multiple $G'$:
$$G'=2P_{2k}+(2n-4)F+S_1+S_2-F_1+F_2.$$
Clearly, the degree of every vertex of $G'$ except from $u_a$ and $u_b$ is $2n$ and $d_{G'}(u_a)=d_{G'}(u_b)=2n-1$ and the multiplicity of every edge of $G'$ is at least 1. 

\item[(II)] Let $b-a$ be even. We may assume that $a$ is odd (otherwise we relabel all the vertices: $u_1\rightarrow u_{2k}, u_2\rightarrow u_{2k-1},...,u_{2k}\rightarrow u_1$). Let $F_1$ be 1-factor of $P'-u_b$ and $F_2$ be 1-factor of $P'-u_a$. Since $a$ is odd, $b$ is odd as well. Thus $P''$ is an even path. Let $F_3$ be 1-factor of $P''$ and set $F_4=P'' -E(F_3)$. Then we define multiple $G'$:
$$G'=2P_{2k}+(2n-4)F+S_1+2S_2-F_1+F_2-2F_3+2F_4.$$
Clearly, the degree of every vertex of $G'$ except from $u_a$ and $u_b$ is $2n$, $d_{G'}(u_a)=d_{G'}(u_b)=2n-1$ if $a\neq b$, $d_{G'}(u_a)=d_{G'}(u_b)=2n-2$ if $a=b$, and the multiplicity of every edge of $G'$ is at least 1. Note that $n\geq 3$ because of $\pi(H_1)\geq 2$.

\end{itemize}

From such a multiple $G'$ in both cases we get a hamiltonian path between two arbitrary vertices of the graph $P_{2k}[H_1, H_2,...,H_{2k}]$ similarly as in the proof of Theorem~\ref{main-odd}.~~~~~~~~~~~~~~~ $\square$

\bigskip

\section{Next results}
Now we easily get results concerning the lexicographic product of $P_{2k+1}$ and given graph~$H$.  

\begin{theorem}
\label{hamcycle}
Let $P_{2k+1}$ be a path with $2k+1$ vertices, $k\geq 1$, and $H$ be a graph. Then the lexicographic product $P_{2k+1}[H]$ is pancyclic if and only if $\pi(H)\geq 1$ and $\lceil\frac{|H|}{\pi(H)}\rceil\leq k+1$.
\end{theorem}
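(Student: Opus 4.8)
The plan is to reduce pancyclicity of $P_{2k+1}[H]$ to two ingredients that are already available: the weak pancyclicity of lexicographic products (Theorem \ref{Pancyclicity}) and the exact hamiltonicity criterion for $P_{2k+1}[H]$, obtained as the all-equal case $H_1=\dots=H_{2k+1}=H$ of Theorem \ref{main-odd}(i). Writing $n=|H|$ and $N=n(2k+1)=|P_{2k+1}[H]|$, I would first record the arithmetic translation: since $k+1$ is a positive integer, $(k+1)\pi(H)\geq n$ holds if and only if $k+1\geq\lceil n/\pi(H)\rceil$. Hence the hypotheses ``$\pi(H)\geq 1$ and $\lceil|H|/\pi(H)\rceil\leq k+1$'' are exactly the specialization of Theorem \ref{main-odd}(i) asserting that $P_{2k+1}[H]$ is hamiltonian, i.e.\ that it contains a cycle of length $N$.

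For the sufficiency direction, I would assume $\pi(H)\geq 1$ and $\lceil|H|/\pi(H)\rceil\leq k+1$. Then $H$ has an edge, and $P_{2k+1}$ has an edge since $k\geq 1$, so Theorem \ref{Pancyclicity} shows $P_{2k+1}[H]$ is weakly pancyclic, while the translation above together with Theorem \ref{main-odd}(i) shows it is hamiltonian, so its circumference equals $N$. It then remains to pin down the girth: taking an edge $u_1u_2$ of $P_{2k+1}$ and an edge $hh'$ of $H$, the three vertices $(u_1,h),(u_1,h'),(u_2,h'')$ form a triangle, so the girth is $3$. A weakly pancyclic graph with girth $3$ and circumference $N$ contains a cycle of every length from $3$ to $N$, and is therefore pancyclic.

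For the necessity direction, I would assume $P_{2k+1}[H]$ is pancyclic; in particular it contains a cycle of length $N$ and is thus hamiltonian. Applying the hamiltonicity half of Theorem \ref{main-odd}(i) (again with all $H_i=H$) yields $\pi(H)\geq 1$ and $(k+1)\pi(H)\geq n$, which by the arithmetic translation is precisely $\pi(H)\geq 1$ and $\lceil|H|/\pi(H)\rceil\leq k+1$.

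The argument is essentially bookkeeping: both directions hinge on the equivalence between the stated numerical conditions and hamiltonicity of $P_{2k+1}[H]$, which is already furnished by Theorem \ref{main-odd}(i). The only genuinely new step is confirming that the shortest cycle has length exactly $3$ whenever $H$ has an edge; this is what upgrades the ``weakly pancyclic'' conclusion of Theorem \ref{Pancyclicity} to full pancyclicity. I expect this girth verification to be where the slight care is needed, though it is routine.
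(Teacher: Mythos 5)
Your proposal is correct and follows essentially the same route as the paper: specialize Theorem \ref{main-odd}(i) to $H_1=\dots=H_{2k+1}=H$ to get the hamiltonicity equivalence, and invoke Theorem \ref{Pancyclicity} to upgrade to pancyclicity. You are in fact slightly more careful than the paper in spelling out why weak pancyclicity plus hamiltonicity yields full pancyclicity (girth $3$ via a triangle on two copies of $H$), a detail the paper leaves implicit.
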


\begin{proof}
Set $H_1=\cdots=H_{2k+1}=H$. By Theorem \ref{main-odd} (i), $P_{2k+1}[H]$ is hamiltonian \hspace{2mm} $\Leftrightarrow$ \hspace{2mm}                                $\sum^k_{i=0}\pi(H_{2i+1})\geq |H|$\hspace{2mm} $\Leftrightarrow$ \hspace{2mm} $(k+1)\pi(H)\geq |H|$\hspace{2mm} $\Leftrightarrow$ \hspace{2mm}$\lceil\frac{|H|}{\pi(H)}\rceil\leq k+1$ because $k+1$ is an integer. The pancyclicity follows from Theorem \ref{Pancyclicity}.   
\end{proof}

\medskip

\begin{theorem} 
\label{hampath}
Let $P_{2k+1}$ be a path with $2k+1$ vertices, $k\geq 1$, and $H$ be a graph. Then the lexicographic product $P_{2k+1}[H]$ is traceable if and only if $\lceil\frac{|H|-1}{\pi(H)}\rceil\leq k+1$.
\end{theorem}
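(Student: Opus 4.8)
The plan is to derive Theorem~\ref{hampath} as a direct corollary of Theorem~\ref{main-odd}~(ii) in exactly the same way that Theorem~\ref{hamcycle} follows from Theorem~\ref{main-odd}~(i). First I would set $H_1=H_2=\cdots=H_{2k+1}=H$, so that the generalized lexicographic product collapses to the ordinary lexicographic product $P_{2k+1}[H]$. Under this specialization the sum $\sum_{i=0}^{k}\pi(H_{2i+1})$ becomes a sum of $k+1$ identical terms, namely $(k+1)\pi(H)$, since the indices $2i+1$ for $i=0,1,\dots,k$ enumerate all $k+1$ odd-position copies and each copy equals $H$.

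Next I would invoke Theorem~\ref{main-odd}~(ii), which states that $P_{2k+1}[H_1,\dots,H_{2k+1}]$ is traceable if and only if $\sum_{i=0}^{k}\pi(H_{2i+1})\geq n-1$, where $n=|H|$. Substituting the specialized value gives the equivalence
$$
P_{2k+1}[H]\text{ is traceable}\quad\Longleftrightarrow\quad (k+1)\pi(H)\geq |H|-1.
$$
The only remaining task is the arithmetic manipulation converting this into the ceiling inequality in the statement. If $\pi(H)\geq 1$, then dividing by $\pi(H)$ yields $k+1\geq \frac{|H|-1}{\pi(H)}$, and since $k+1$ is an integer, this is equivalent to $\lceil\frac{|H|-1}{\pi(H)}\rceil\leq k+1$; this is precisely the step used in the proof of Theorem~\ref{hamcycle}.

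The one point requiring a little care is the edge case $\pi(H)=0$. Here the ceiling expression $\lceil\frac{|H|-1}{\pi(H)}\rceil$ is not defined by division, so the statement must be read with the convention that an undefined or infinite ceiling fails the inequality $\leq k+1$, matching the fact that $(k+1)\cdot 0 = 0 \geq |H|-1$ forces $|H|\leq 1$. When $\pi(H)=0$ the graph $H$ has no edges, so $P_{2k+1}[H]$ is traceable precisely when a Hamiltonian path exists, which (for $|H|\geq 2$) is impossible because the odd path structure prevents covering all vertices; the inequality $(k+1)\pi(H)\geq|H|-1$ reduces to $|H|\leq 1$, agreeing with the criterion. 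I expect this trivial-$\pi$ boundary to be the only genuine obstacle, and it is handled implicitly by the same integer-ceiling argument once one notes $\pi(H)\geq 1$ is forced in all nontrivial cases; otherwise the proof is a one-line substitution into Theorem~\ref{main-odd}~(ii).
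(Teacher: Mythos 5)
Your proposal is correct and follows exactly the same route as the paper: specialize $H_1=\cdots=H_{2k+1}=H$ in Theorem~\ref{main-odd}~(ii), obtain $(k+1)\pi(H)\geq|H|-1$, and convert to the ceiling inequality using the integrality of $k+1$. Your extra remark on the $\pi(H)=0$ boundary is a reasonable clarification that the paper leaves implicit, but it does not change the argument.
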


\begin{proof}
Set $H_1=\cdots=H_{2k+1}=H$. By Theorem \ref{main-odd} (ii), $P_{2k+1}[H]$ is traceable \hspace{2mm} $\Leftrightarrow$ \hspace{2mm}$\sum^k_{i=0}\pi(H_{2i+1})\geq |H|-1$ \hspace{1mm} $\Leftrightarrow$\hspace{1mm} $(k+1)\pi(H)\geq |H|-1$ \hspace{1mm} $\Leftrightarrow$ \hspace{1mm}$\lceil\frac{|H|-1}{\pi(H)}\rceil\leq k+1$ because $k+1$ is an integer.   
\end{proof}

\medskip

\begin{corollary}
Let $G$ and $H$ be graphs, $|G|\geq 2$. Suppose that $G$ contains a hamiltonian path.

\begin{itemize}

\item If $|G|=2k+1$ and $\frac{|H|}{k+1}\leq\pi(H)$, then $G[H]$ is hamiltonian.

\item If $|G|=2k+1$ and $\frac{|H|-1}{k+1}\leq\pi(H)$, then $G[H]$ is traceable.

\end{itemize}
\end{corollary}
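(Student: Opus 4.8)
The plan is to reduce both statements to the results already obtained for the ordinary lexicographic product $P_{2k+1}[H]$, using the fact that hamiltonicity and traceability are preserved when edges are added to a graph on a fixed vertex set. The central observation is that a hamiltonian path of $G$ orders the vertices of $G$ as $v_1,v_2,\dots,v_{2k+1}$ with $v_iv_{i+1}\in E(G)$ for every $i$; identifying $u_i$ with $v_i$ exhibits $P_{2k+1}$ as a spanning subgraph of $G$ (here $|G|=2k+1$).

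First I would verify that $P_{2k+1}[H]$ is a spanning subgraph of $G[H]$. Both graphs have vertex set $V(G)\times V(H)$, so it suffices to compare edges. Every edge of $P_{2k+1}[H]$ is either an edge inside a single copy of $H$, which is present in $G[H]$ as well, or an edge joining the copies over $v_i$ and $v_{i+1}$; since $v_iv_{i+1}\in E(G)$, all such cross-edges also lie in $G[H]$. Hence $E(P_{2k+1}[H])\subseteq E(G[H])$.

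Next I would invoke the earlier theorems to handle $P_{2k+1}[H]$ itself. For the first item, the hypothesis $\frac{|H|}{k+1}\le\pi(H)$ is equivalent to $(k+1)\pi(H)\ge|H|$; since $|H|\ge 1$ forces $\pi(H)\ge 1$ and $k+1$ is an integer, this is exactly the condition of Theorem \ref{hamcycle}, so $P_{2k+1}[H]$ is hamiltonian. For the second item, $\frac{|H|-1}{k+1}\le\pi(H)$ rewrites as $(k+1)\pi(H)\ge|H|-1$, the hypothesis of Theorem \ref{hampath}, so $P_{2k+1}[H]$ is traceable.

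Finally I would conclude by monotonicity: a hamiltonian cycle (respectively a hamiltonian path) of the spanning subgraph $P_{2k+1}[H]$ uses only edges that are also present in $G[H]$ and already visits every vertex of $G[H]$, so it is a hamiltonian cycle (respectively hamiltonian path) of $G[H]$ itself. I do not anticipate a genuine obstacle; the only point requiring care is confirming that the spanning-subgraph relation transfers the cycle or path upward unchanged, which is immediate because no vertices are added and edges only increase.
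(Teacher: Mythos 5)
Your proposal is correct and is essentially the argument the paper intends: the corollary is stated without an explicit proof precisely because it follows by viewing $P_{2k+1}[H]$ as a spanning subgraph of $G[H]$ (via the hamiltonian path of $G$) and applying Theorems \ref{hamcycle} and \ref{hampath}, which is exactly what you do. The only phrasing to tighten is that $\pi(H)\geq 1$ follows from the hypothesis $(k+1)\pi(H)\geq|H|\geq 1$ rather than from $|H|\geq 1$ alone, but your argument clearly uses it this way.
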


Thus we improved Theorem \ref{KriTei} and the bounds are the best possible. Moreover we get a similar result also for hamiltonian connectivity.

\bigskip

\begin{theorem}
\label{hamcon-odd}
Let $P_{2k+1}$ be a path with $2k+1$ vertices, $k\geq 1$, and $H$ be a graph. Then the lexicographic product $P_{2k+1}[H]$ is hamiltonian connected if and only if $\pi(H)\geq 2$ and $\lceil\frac{|H|+1}{\pi(H)}\rceil\leq k+1$.
\end{theorem}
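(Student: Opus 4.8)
The plan is to mirror exactly the reduction used in the proofs of Theorem~\ref{hamcycle} and Theorem~\ref{hampath}, specializing the generalized product to the ordinary lexicographic product and then invoking the already-proved characterization of hamiltonian connectivity. Concretely, I would set $H_1=H_2=\dots=H_{2k+1}=H$ and write $n=|H|$, so that $P_{2k+1}[H]=P_{2k+1}[H_1,H_2,\dots,H_{2k+1}]$. Since $k\geq 1$, we are in the case $k\geq 1$ of Theorem~\ref{main-odd}~(iii), which states that this graph is hamiltonian connected if and only if $\pi(H_1)\geq 2$, $\pi(H_{2k+1})\geq 2$ and $\sum_{i=0}^{k}\pi(H_{2i+1})\geq n+1$.

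The second step is to translate each of these three conditions under the substitution $H_i=H$. Because $\pi(H_1)=\pi(H_{2k+1})=\pi(H)$, the two degree conditions $\pi(H_1)\geq 2$ and $\pi(H_{2k+1})\geq 2$ collapse into the single condition $\pi(H)\geq 2$. Likewise $\sum_{i=0}^{k}\pi(H_{2i+1})=(k+1)\pi(H)$, so the summation condition becomes $(k+1)\pi(H)\geq n+1$. Thus $P_{2k+1}[H]$ is hamiltonian connected if and only if $\pi(H)\geq 2$ and $(k+1)\pi(H)\geq |H|+1$.

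The final step is the arithmetic equivalence $(k+1)\pi(H)\geq |H|+1 \iff \lceil\frac{|H|+1}{\pi(H)}\rceil\leq k+1$. Here the only point requiring care is that this division is legitimate: it relies on $\pi(H)>0$, which is guaranteed by the separately established condition $\pi(H)\geq 2$. Dividing the inequality $(k+1)\pi(H)\geq |H|+1$ by $\pi(H)>0$ gives $k+1\geq\frac{|H|+1}{\pi(H)}$, and since $k+1$ is an integer this is equivalent to $k+1\geq\lceil\frac{|H|+1}{\pi(H)}\rceil$, using the standard fact that for a real $x$ and an integer $m$ one has $m\geq x$ if and only if $m\geq\lceil x\rceil$. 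This closes the chain of equivalences and yields exactly the stated characterization.

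I do not expect any genuine obstacle in this argument, since all the combinatorial work has already been carried out in Theorem~\ref{main-odd}~(iii); the proof is a pure specialization together with a ceiling-function manipulation, entirely parallel to the proofs of Theorems~\ref{hamcycle} and~\ref{hampath}. The only subtlety worth flagging explicitly is that the hypothesis $\pi(H)\geq 2$ must be retained as an independent condition (rather than being absorbed into the ceiling inequality), both because it is needed to make the division by $\pi(H)$ valid and because the ceiling condition alone would be vacuous or ill-defined when $\pi(H)\in\{0,1\}$.
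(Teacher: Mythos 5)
Your proposal is correct and follows essentially the same route as the paper: specialize Theorem~\ref{main-odd}~(iii) with $H_1=\dots=H_{2k+1}=H$, collapse the endpoint conditions to $\pi(H)\geq 2$, and convert $(k+1)\pi(H)\geq |H|+1$ to the ceiling inequality using the integrality of $k+1$. Your explicit remarks on why $\pi(H)\geq 2$ must be kept as a separate hypothesis are a welcome addition the paper leaves implicit.
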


\begin{proof}
Set $H_1=\cdots=H_{2k+1} = H$. By Theorem \ref{main-odd} (iii), $P_{2k+1}[H]$ is hamiltonian connected \hspace{2mm} $\Leftrightarrow$ \hspace{2mm}$\sum^k_{i=0}\pi(H_{2i+1})\geq |H|+1$ \hspace{1mm}                                     
$\Leftrightarrow$\hspace{1mm} $(k+1)\pi(H)\geq |H|+1$ \hspace{1mm} $\Leftrightarrow$ \hspace{1mm}$\lceil\frac{|H|+1}{\pi(H)}\rceil\leq k+1$ because $k+1$ is an integer.   
\end{proof}

\begin{corollary}
Let $G$ and $H$ be graphs, $|G|\geq 2$. Suppose that $G$ contains a hamiltonian path.

If $|G|=2k+1$ and $\frac{|H|+1}{k+1}\leq\pi(H)$, then $G[H]$ is hamiltonian connected.
\end{corollary}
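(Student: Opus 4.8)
The plan is to reduce the assertion to the already-established path case, Theorem~\ref{hamcon-odd}, via a spanning-subgraph argument. First I would observe that, since $G$ contains a hamiltonian path and $|G|=2k+1$, the path $P_{2k+1}$ is a spanning subgraph of $G$; consequently $P_{2k+1}[H]$ is a spanning subgraph of $G[H]$, because the two products share the vertex set $V(G)\times V(H)$ and every edge of $P_{2k+1}[H]$ (whether inside a copy of $H$ or between copies of two consecutive path vertices) is also an edge of $G[H]$. Next I would invoke the monotonicity of hamiltonian connectivity under edge addition: a hamiltonian $xy$-path of the subgraph $P_{2k+1}[H]$ is at once a hamiltonian $xy$-path of the supergraph $G[H]$. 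Hence it suffices to show that $P_{2k+1}[H]$ is hamiltonian connected.

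The second step is to verify the hypotheses of Theorem~\ref{hamcon-odd} for $P_{2k+1}[H]$. Converting the density assumption is routine: multiplying $\frac{|H|+1}{k+1}\le\pi(H)$ by $k+1>0$ gives $(k+1)\pi(H)\ge|H|+1$, and dividing by $\pi(H)$ yields $\frac{|H|+1}{\pi(H)}\le k+1$; since $k+1$ is an integer, taking ceilings gives $\lceil\frac{|H|+1}{\pi(H)}\rceil\le k+1$, which is precisely the second condition of Theorem~\ref{hamcon-odd}. The same inequality rules out $\pi(H)=0$, since $|H|+1\ge 2>0$, so it also secures $\pi(H)\ge 1$. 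With both conditions in hand, Theorem~\ref{hamcon-odd} yields that $P_{2k+1}[H]$ is hamiltonian connected, and the reduction of the first step transfers this to $G[H]$.

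The delicate point, and the step I expect to be the real obstacle, is the remaining hypothesis $\pi(H)\ge 2$ of Theorem~\ref{hamcon-odd}. Unlike the hamiltonicity and traceability corollaries, where the fractional bound already forces the required lower bound on $\pi(H)$, here the inequality $\frac{|H|+1}{k+1}\le\pi(H)$ can be met with $\pi(H)=1$ when $|H|$ is small relative to $k$ (for instance $H=K_2$ with $k\ge 2$), and in that regime Theorem~\ref{hamcon-odd} shows $P_{2k+1}[H]$ is \emph{not} hamiltonian connected. I would therefore carry the condition $\pi(H)\ge 2$ explicitly among the hypotheses; it is harmless whenever the density bound is nontrivial, and once it is assumed the proof consists exactly of the two steps above.
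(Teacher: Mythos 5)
Your derivation is exactly the one the paper intends (and leaves implicit): $P_{2k+1}$ is a spanning subgraph of $G$, hence $P_{2k+1}[H]$ is a spanning subgraph of $G[H]$, hamiltonian connectivity is monotone under edge addition, and the arithmetic $\frac{|H|+1}{k+1}\leq\pi(H)\Leftrightarrow\lceil\frac{|H|+1}{\pi(H)}\rceil\leq k+1$ (using that $k+1$ is an integer) feeds the hypothesis of Theorem~\ref{hamcon-odd}. More importantly, the ``delicate point'' you isolate is not a mere caution: the corollary as printed is actually false without the extra assumption. For $k=1$ the bound does force $\pi(H)\geq\frac{|H|+1}{2}\geq 2$ (since $\pi(H)\leq|H|-1$), but for $k\geq 2$ one can have $\pi(H)=1$ and $|H|\leq k$, e.g.\ $G=P_5$, $H=K_2$: the hypothesis $\frac{|H|+1}{k+1}=1\leq\pi(H)$ holds, yet $P_5[K_2]$ has no hamiltonian path between the two vertices of the second copy of $H$ (the first copy forces too many path-edges into the second), consistent with the necessity of $\pi(H)\geq 2$ in Theorem~\ref{hamcon-odd}. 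So your decision to carry $\pi(H)\geq 2$ as an explicit hypothesis is the correct repair, and with it your two-step argument is complete; the only thing to note is that you are proving a (correctly) amended statement rather than the literal one.
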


\begin{theorem}
\label{hamcon-even}
Let $P_{2k}$ be a path with $2k$ vertices, $k\geq 1$, and $H$ be a graph. Then the lexicographic product $P_{2k}[H]$ is hamiltonian connected if and only 
      \begin{itemize}
         \item[$\bullet$] $\pi(H)\geq 1$ for $k=1$.
         \item[$\bullet$] $\pi(H)\geq 2$ for $k>1$.
      \end{itemize}
\end{theorem}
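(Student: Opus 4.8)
The plan is to specialize Theorem \ref{main-even} (iii) to the case where all factors coincide, exactly as was done for the odd-path results in Theorems \ref{hamcycle}, \ref{hampath} and \ref{hamcon-odd}. First I would set $H_1 = H_2 = \cdots = H_{2k} = H$, so that the generalized lexicographic product $P_{2k}[H_1, \ldots, H_{2k}]$ becomes the ordinary lexicographic product $P_{2k}[H]$. Since both directions of Theorem \ref{main-even} (iii) are phrased as an equivalence, each one transfers verbatim to this special case.

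Then I would simply read off the two regimes from the two bullet points of Theorem \ref{main-even} (iii). For $k = 1$, the criterion $\pi(H_1) \geq 1$ and $\pi(H_{2k}) \geq 1$ collapses, under the identification $H_1 = H_{2k} = H$, to the single condition $\pi(H) \geq 1$. For $k > 1$, the criterion $\pi(H_1) \geq 2$ and $\pi(H_2) \geq 2$ collapses to $\pi(H) \geq 2$. These are precisely the two conditions in the statement to be proved, so the proof is a one-line invocation followed by a case split on $k$, and I would present it in the same compact equivalence-chain style already used in the proofs of Theorems \ref{hamcycle}, \ref{hampath} and \ref{hamcon-odd}.

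I do not expect any genuine obstacle: all the substantive content — constructing the connected multiple $G'$ of $P_{2k}$ with the prescribed degree sequence, lifting an eulerian trail to an $xy$-hamiltonian path for arbitrary endvertices, and the matching necessity argument showing that $\pi(H) \leq 1$ (for $k>1$) obstructs some pair of endvertices lying in an internal copy — has already been carried out in the proof of Theorem \ref{main-even}. The only point deserving a sentence of care is to note that, under $H_1 = \cdots = H_{2k} = H$, the nominally different index constraints appearing in the statement and in the necessity argument (namely $H_1, H_2$ versus $H_1, H_{2k}$) all reduce to the same inequality on $\pi(H)$, so the specialized characterization is unambiguous and no additional verification is required.
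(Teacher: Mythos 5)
Your proposal is correct and coincides with the paper's own proof, which simply states that the theorem is an easy corollary of Theorem \ref{main-even} (iii) obtained by setting $H_1=\cdots=H_{2k}=H$. Your extra remark that the index discrepancy between $H_2$ and $H_{2k}$ in the statement of Theorem \ref{main-even} (iii) becomes irrelevant under this identification is a sensible observation, but nothing further is needed.
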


\begin{proof}
Easy corollary of Theorem \ref{main-even} (iii).
\end{proof}

\begin{corollary}
Let $G$ and $H$ be graphs, $|G|\geq 2$. Suppose that $G$ contains a hamiltonian path.

\begin{itemize}
\item[] If $|G|=2$ and $||H||\geq 1$, then $G[H]$ is hamiltonian connected.
\item[] If $|G|=2k$, for $k>1$, and $||H||\geq 2$, then $G[H]$ is hamiltonian connected.
\end{itemize}

\end{corollary}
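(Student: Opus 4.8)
The plan is to reduce the statement to Theorem \ref{hamcon-even} by exploiting the hamiltonian path of $G$ together with the monotonicity of hamiltonian connectivity under edge addition. First I would note that, since $G$ contains a hamiltonian path, there is a spanning path $P_{|G|}\subseteq G$ on the vertex set $V(G)$. Consequently the lexicographic product $P_{|G|}[H]$ is a spanning subgraph of $G[H]$: the two graphs share the vertex set $V(G)\times V(H)$, every edge inside a single copy of $H$ belongs to both, and every edge of $P_{|G|}[H]$ joining two distinct copies of $H$ corresponds to an edge of $P_{|G|}$, hence to an edge of $G$.

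Next I would invoke the elementary fact that hamiltonian connectivity is preserved under adding edges: if a spanning subgraph is hamiltonian connected, then so is the whole graph, because a hamiltonian $xy$-path of the subgraph is also a hamiltonian $xy$-path of the supergraph. Therefore it suffices to prove that $P_{|G|}[H]$ is hamiltonian connected under the stated hypotheses, and for that I would apply Theorem \ref{hamcon-even} with $|G|=2$ (case $k=1$) or $|G|=2k$, $k>1$, respectively.

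It then remains to translate the edge conditions $\|H\|\geq 1$ and $\|H\|\geq 2$ into the $\pi(H)$-conditions of Theorem \ref{hamcon-even}, via the equivalences $\|H\|\geq 1\Leftrightarrow\pi(H)\geq 1$ and $\|H\|\geq 2\Leftrightarrow\pi(H)\geq 2$. The first is immediate, since a single edge is already a linear forest with one edge (extended to a spanning one by adjoining the remaining vertices as isolated trivial paths). For the second, any two distinct edges of a simple graph either share an endpoint (forming a $P_3$) or are disjoint (forming a matching $2K_2$); in both cases they constitute a linear forest, so $\pi(H)\geq 2$, and the converse directions are trivial. Thus for $|G|=2$ the hypothesis $\|H\|\geq 1$ gives $\pi(H)\geq 1$ and Theorem \ref{hamcon-even} yields that $P_2[H]$, and hence $G[H]$, is hamiltonian connected; for $|G|=2k$ with $k>1$ the hypothesis $\|H\|\geq 2$ gives $\pi(H)\geq 2$ and Theorem \ref{hamcon-even} finishes the argument.

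The only mildly delicate point is the equivalence $\|H\|\geq 2\Leftrightarrow\pi(H)\geq 2$, but since any two edges of a simple graph automatically form a linear forest this is routine. The genuine content of the corollary is already carried by Theorem \ref{hamcon-even} and by the observation that a hamiltonian path in $G$ lets us pass through a spanning copy of $P_{|G|}[H]$, so I do not expect any real obstacle beyond these bookkeeping steps.
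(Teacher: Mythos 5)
Your proof is correct and follows the same route the paper intends: the paper states this corollary without proof as an immediate consequence of Theorem \ref{hamcon-even}, and your argument simply makes explicit the two routine ingredients (a hamiltonian path in $G$ makes $P_{|G|}[H]$ a spanning subgraph of $G[H]$, so hamiltonian connectivity lifts, and $\|H\|\geq j\Leftrightarrow\pi(H)\geq j$ for $j\in\{1,2\}$). Nothing is missing.
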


\section{Conclusion}
In this paper we finished a complete characterization of hamiltonicity (Theorem~\ref{KriTei} and Theorem \ref{hamcycle}), traceability (Theorem \ref{KriTei} and Theorem \ref{hampath}) and hamiltonian connectedness (Theorem \ref{hamcon-odd} and Theorem \ref{hamcon-even}) of $G[H]$, where $G$ is a path. Hence we improved and extended results in \cite{Kri}. Moreover we proved these results also for lex-regular generalized lexicographic products. If $G$ has no hamiltonian path, then for general graphs $G$ it seems to be complicated to characterize when $G[H]$ is traceable, hamiltonian or hamiltonian connected. Let us mention that Kaiser and Kriesell proved in \cite{KaiKri} that if $G$ is 4-tough and $||H||\geq 1$, then $G[H]$ is hamiltonian. Since $G$ is 4-tough implies that $G$ has a 2-walk, $G$ is not so far from being hamiltonian. Clearly,
 $$\mbox{hamiltonicity} \Longrightarrow \mbox{traceability} \Longrightarrow \mbox{2-walk}  
 \Longrightarrow \mbox{3-tree} \Longrightarrow \mbox{3-walk} \Longrightarrow ....$$ 
Hence it could be interesting to study hamiltonian paths and cycles in $G[H]$ if G is a 3-tree. 
 
 \bigskip
 
 This work was partly supported by the European Regional Development Fund (ERDF), project NTIS - New Technologies for Information Society, European Centre of Excellence, CZ.1.05/1.1.00/02.0090, and by project GA20-09525S of the Czech Science Foundation.
 
\bigskip

\end{document}